\newtheorem{remark}{\textbf{\ \ \quad Remark}}[section]
\newtheorem{prop}{\textbf{\ \ \quad Proposition}}[section]
\renewcommand{\b}{\beta}
\newcommand{\lam}{\lambda}
\newcommand{\s}{\sigma}
\renewcommand{\phi}{\varphi}
\newcommand{\e}{\varepsilon}
\renewcommand{\a}{\alpha}
\renewcommand{\b}{\beta}
\newcommand{\R}{{\mathbb R}}
\newcommand{\tr}{\triangle}
\newcommand{\eps}{\varepsilon}
\newcommand{\be}{\begin{equation}}
\newcommand{\ee}{\end{equation}}
\newcommand\bes{\begin{eqnarray}}
\newcommand\ees{\end{eqnarray}}
\newcommand{\bess}{\begin{eqnarray*}}
\newcommand{\eess}{\end{eqnarray*}}
\newcommand{\dx}{{\rm d}x}
\newcommand{\dy}{{\rm d}y}
\newcommand{\dt}{{\rm d}t}
\newcommand{\befig}{\begin{figure}}
\newcommand{\enfig}{\end{figure}}
\newcommand{\bear}{\begin{eqnarray}}
\newcommand{\enar}{\end{eqnarray}}
\newcommand{\bearn}{\begin{eqnarray*}}
\newcommand{\enarn}{\end{eqnarray*}}
\title{Fokker-Planck equation driven by asymmetric L\'evy motion
}
\author{Xiao Wang$^1$, Wenpeng Shang$^1$, Xiaofan Li$^2$\footnote{Corresponding author. E-mail: lix@iit.edu}, Jinqiao Duan$^2$, Yanghong Huang $^3$  \\
\\
\ \\
   {\small \it $^1$ School of Mathematics and Statistics, Henan University}\\
  {\small \it Kaifeng 475001, China }\\
  {\small \tt email xwang@vip.henu.edu.cn}\\
   {\small \it $^2$ Department of Applied Mathematics, Illinois Institute of Technology}\\
  {\small \it Chicago, IL 60616, USA }\\
  {\small \tt email lix@iit.edu(X. Li), duan@iit.edu(J. Duan)}\\
  {\small \it $^3$ Department of Applied Mathematics,   University}\\
  {\small \it School of Mathematics, The University of Manchester, Manchester, M13 9PL, UK }\\
   {\small \tt  email yanghong.huang@manchester.ac.uk  }
}
\begin{document}
\maketitle

\begin{abstract}
Non-Gaussian L\'evy noises are present in many models for understanding
underlining principles of physics, finance, biology and more.
In this work, we consider the Fokker-Planck equation(FPE) due to
one-dimensional asymmetric L\'evy motion, which is a nonlocal
partial differential equation.
We present an accurate numerical quadrature for the singular integrals in
the nonlocal FPE and develop a fast summation method to reduce the order
of the complexity from $O(J^2)$ to $O(J\log J)$ in one time-step,
where $J$ is the number of unknowns.
We also provide conditions under which the numerical schemes satisfy
maximum principle.
Our numerical method is validated by comparing with exact solutions
for special cases.
We also discuss the properties of the probability density functions
and the effects of various factors on the solutions, including
the stability index, the skewness parameter,
the drift term, the Gaussian and non-Gaussian noises and the domain size.

\medskip
\emph{Key words:} Non-Gaussian noises, Fokker-Planck equations, asymmetric $\a$-stable L\'evy motion, nonlocal partial differential equation, fast algorithm
\end{abstract}

\baselineskip=15pt

\section{Introduction}

The Fokker-Planck equation (FPE) was first used to describe the Brownian motion of particles,
which gives the time evolution of the probability density function
for the systems\cite{Gardiner2004,FPE_Risken}.
For stochastic differential equations (SDEs) driven by Brownian motion,
the corresponding FPE is a second-order parabolic partial differential equation.
For some special cases, the analytic solutions
can be founded but in general people seek numerical solutions with well-developed numerical methods for such differential equations.
However, noisy fluctuations are usually non-Gaussian in nature,
and the investigation of the FPE induced by non-Gaussian noises
is still in its infancy.
Non-Gaussian noises are widely found to describe the phenomenon
in physics, biology, economics among other fields\cite{Nature10,Cartea07,Xu2013}.
In this work, we consider FPEs derived from stable L\'evy motion
because of its properties of 'heavy tail' and central limit theorem.

A L\'evy motion is a stochastic process which has independent and stationary increments, stochastically continuous
sample paths. It is completely determined by the L\'evy-Khintchine formula, i.e. a L\'evy motion is characterized
by the generating triplet $(b,A,\nu)$, $b$ is a drift vector, $A$ is a diffusion matrix and $\nu$
is an L\'evy measure satisfying $\int_{\mathbb{R}^n \setminus \{0\}}(|y|^2\wedge 1)\nu(\dy)<\infty$.

In this paper, we mainly consider FPEs corresponding to scalar SDEs
with stable L\'evy motion whose distribution $S_\a(\tau,\b,\mu)$
is determined by four parameters: the index of stability $\a$ ($0<\a\leq 2$),
the scaling parameter $\tau$ ($\tau\geq 0$),
the skewness parameter ($-1\leq \beta \leq 1$) and the shift parameter $\mu$.
The L\'evy-Khintchine formula
for the stable L\'evy motion is \cite{Sato-99, asymmetric_Hein}
\bear \label{Charfun}
    \mathbb{E}(e^{i\lam L_t}) =
    \begin{cases}
        \mbox{exp}\{-\tau^{\a}|\lam|^{\a}t(1-i\beta \mbox{sgn}{\lam} \tan{\frac{\pi \a}2}) + i\mu \lam t \},\; \mbox{for} \; \a \neq 1,\\
        \mbox{exp}\{-\tau|\lam|t(1+i\beta \frac{2}{\pi} \mbox{sgn}{\lam} \log{|\lam|}) + i\mu \lam t \}, \quad \mbox{for} \;\a = 1 .
    \end{cases}
\enar
The $\a$-stable random variable is strictly stable for $\mu=0$ and $\a\neq1$, but when $\a=1$, it is strictly stable if and only if the process
is symmetric ($\b=0$). We take $\s=1$ and $\mu=0$ for consideration.

Recently, numerous research focus on the symmetric stable
L\'evy motion, corresponding to $\beta=0$ in \eqref{Charfun},
partially because the infinitesimal generator of the
process is related to  the fractional Laplacian operator. The operator
has many equivalent definitions including singular integrals, the Riesz
potential operator, Bochner's subordination and so on \cite{Kwasnicki17}.
Schertzer \emph{et al.}\cite{Duan01} derived a fractional FPE of
nonlinear stochastic differential equations driven by non-Gaussian
L\'evy stable noises and discussed the existence and uniqueness
of the solution.
A number of references \cite{Wei15, Wang17} showed
the existence and uniqueness of weak solution
for nonlocal Fokker-Planck equations.
Huang \emph{et al.} \cite{qiao16} investigated weak and strong maximum principles
for a class of general L\'evy type Markov generators(nonlocal Waldenfels operators).
The regularity results for the solutions are given
in \cite{ROSOTON14, GRUBB15,Cozzi17}.

The numerical methods developed recently for the case include
finite differences with quadrature, spectral method, Galerkin finite element
method.
Using the different definitions of fractional derivatives, Liu \emph{et al.}\cite{Liu2004}
transformed the space fractional Fokker-Planck equation into ordinary differential equations and
solved it by a method of lines.
Huang \emph{et al.} \cite{Huang14}, Gao \emph{et al.} \cite{Ting12}
and Wang \emph{et al.} \cite{METXiao2015} presented finite difference methods
with different quadrature rules in one and two dimensions.
Du \emph{et al.} \cite{Du2012, TD15} considered the discontinuous
and continuous Galerkin methods for certain nonlocal diffusion problems.
D'Elia \emph{et al.} \cite{DELIA13} used the nonlocal vector calculus
to show that the solutions of these nonlocal problems converge to the
solutions of fractional Laplacian problems
as the domain of the nonlocal interactions becomes infinite.
Mao \emph{et al.} \cite{FPDE_Shen, Mao17} developed efficient spectral-Galerkin
algorithms for fractional partial differential equations.
Recently, Acosta \emph{et al.} \cite{Acosta17}
dealt with the integral version of the Dirichlet homogeneous fractional
Laplace equation and gave a finite element analysis. They also presented
high-order numerical methods for one-dimensional fractional-Laplacian
boundary value problems \cite{AcostaMC2017}.

 Asymmetric $\a$-stable L\'evy motion is widely applied in physical sciences
and economy \cite{Mengli,asy_envirment,first}.
Chen \emph{et al.} \cite{Chen2016, CHEN17} constructed and proved
the existence and the uniqueness of the fundamental
solution (heat kernel) of nonsymmetric L\'evy-type operator
and established its sharp two-sided estimates.
Riabiz \emph{et al.} \cite{Riabiz17} gives the modified Poisson series
representation of linear stochastic processes
driven by asymmetric stable L\'evy process.
As the lack of the closed-form density expressions limits its application,
we will provide the numerical approximation for the probability density as solution to Fokker-Planck equation.
However, to our knowledge, there are few work in developing the numerical methods for the asymmetric case.
Zeng \emph{et al.} \cite{Zeng10} presented the numerical solution of the space-fractional FPE and studied properties
of parameter-induced aperiodic stochastic resonance in the presence of asymmetric L\'evy noise.

In our work, we mainly consider the FPE driven by asymmetric L\'evy motion,
develop a fast numerical scheme and discuss the properties of
the probability density functions.
This paper is structured as follows.
In section 2, we present the Fokker-Planck equation for a SDE driven by
asymmetric $\a$-stable L\'evy motion.
In section 3, we show the symmetry of solutions and present
the numerical scheme. The numerical solutions and their properties
are shown in section 4.
Finally, we summarize the results in section 5.

\section{Fokker-Planck equation driven by stable L\'evy motion    }
\setcounter{equation}{0}

Consider the following SDE
\begin{equation} \label{SDE01}
\mathrm{d}X_t=f(X_t)\mathrm{d}t+\sigma\mathrm{d}B_t+\mathrm{d}L_t, \quad X_0=x.
\end{equation}
where $f$ is a drift term, $\sigma$ is non-negative diffusion constant,
$L_t$ is a L\'evy motion with the generating triplet $(\e K_{\a,\beta},0,\e\nu_{\a,\b})$ and $\nu_{\a,\b}$ is a L\'evy jump measure
to be defined later. We choose the L\'evy motion $L_t$ such that $L_1$ is the random variable whose the probability density
function(PDF) corresponds to the stable distribution $S_\a(1,\b,0)$.
Here, $\a$ ($0<\a\leq2$) is the index of stability and $\e$ is the intensity of L\'evy noise.

For every $\phi \in H_0^2(\mathbb{R})$, the generator for the solution of SDE (\ref{SDE01}) is
\bear \label{oper}
   \mathscr{A}\phi=( f(x)+\e K_{\a,\b})\phi_x+\frac{\sigma^2}{2}\phi_{xx}+\varepsilon \mathscr{L}_{\a,\b}\phi,
\enar
where
\begin{eqnarray} \label{operator1}
    \mathscr{L}_{\a,\beta}\phi&=&\int_{\mathbb{R}\setminus\{0\}}
    [\phi(x+y)-\phi(x)-1_{\{|y| <1\}}y\phi'(x)]\nu_{\a,\b}(\mathrm{d}y).
\end{eqnarray}

The Fokker-Planck equation for the PDF $p(x,t)$ of the process $X_t$ associated with SDE (\ref{SDE01}) is
\begin{equation}
   p_t= \mathscr{A}^*p, \qquad p(x,0)=p_0(x),
\label{eq.fpeas}
\end{equation}
where $ \mathscr{A}^*$ is the adjoint operator of $\mathscr{A}$ in Hilbert space\cite{Duanbook2}, given by
\be \label{jointOper}
   \mathscr{A}^*\varphi = -((f+ \e K_{\a,\b})\varphi)_x+\frac{\sigma^2}{2}\phi_{xx}-\e \int_{\mathbb{R}\setminus\{0\}}
   [\varphi(x)-\varphi(x-y)-1_{\{|y| <1\}}y\varphi'(x)]\nu_{\a,\b}(\mathrm{d}y).
\ee

\subsection{Asymmetric $\alpha$-stable L\'evy motion}
In the following, we focus on the FPE driven by asymmetric $\a$-stable L\'evy motion.

The L\'evy measure $\nu_{\a,\beta}$ is given by \cite{Xiao2016}
\bear  \label{asy_measure}
   \nu_{\a,\beta}({\rm d}y) = \frac{C_p 1_{\{0<y<\infty\}}(y)+C_n 1_{\{-\infty<y<0\}}(y)}{|y|^{1+\alpha}} {\rm d}y,
\enar
with
\bear \label{C1C2}
     C_p = C_{\a}\frac{1+\beta}2, \quad C_n = C_{\a}\frac{1-\beta}2,\quad -1 \leq \beta \leq 1,
\enar
and
\bear  \label{C_alpha}
    C_{\a} =
    \begin{cases}
        \frac{\a(1-\a)}{\Gamma(2-\a)\cos{(\frac{\pi \a}2)}}\;,   &\text{ $ \a \neq 1 $;}\\
        \frac2{\pi},  \;  &\text{ $ \a = 1$.}
    \end{cases}
\enar
The constant $K_{\a,\beta}$ in \eqref{oper} is given by
\bear  \label{K}
    K_{\a,\beta}=
    \begin{cases}
        \frac{C_p-C_n}{1-\a}=\frac{\beta C_\a}{1-\a} \;,   &\text{ $ \a \neq 1 $;}\\
        (\int_1^{\infty}\frac{\sin(x)}{x^2}{\rm d}x + \int_0^{1}\frac{\sin(x)-x}{x^2}{\rm d}x)(C_n-C_p),  \;  &\text{ $ \a = 1$.}
    \end{cases}
\enar
Here notice that $C_p(-\b) = C_n(\beta)$ and $K_{\a,-\b}= -K_{\a,\b}$.

We can rewrite the adjoint operator of $\mathscr{L}_{\a,\b}$
\begin{eqnarray*}
  -\mathscr{L}_{\a,\b}^*\varphi(x)=\int_{\mathbb{R}\setminus\{0\}}
  [\varphi(x)-\varphi(x-y)-1_{\{|y|<1\}}y\varphi'(x)]\nu_{\a,\b}(\mathrm{d}y)
\end{eqnarray*}
as
\begin{eqnarray} \label{adjoint}
  -\mathscr{L}_{\a,\b}^*\varphi(x)= -\int_{\mathbb{R}\setminus\{0\}}[\varphi(x+y)-\varphi(x)-1_{\{|y|<1\}}y\varphi'(x)]\nu_{\a,-\b}(\mathrm{d}y).
\end{eqnarray}
by making the change of integration variable $y\rightarrow -y$.

Finally, the adjoint operator of the nonlocal term in Eq.~(\ref{jointOper}) is
\be \label{asy_generator}
    \mathscr{L}_{\a, \b}^* \varphi(x) = \int_{\mathbb{R}\setminus\{0\}}[\varphi(x+y)-\varphi(x)-1_{\{|y|<1\}}y\varphi'(x)]\nu_{\a,-\b}(\mathrm{d}y).
\ee

For numerical computations, we change $1_{\{|y|<1\}}$ to $1_{\{|y|<b\}}$ in (\ref{asy_generator}).
Then, the Fokker-Planck equation driven by the asymmetric L\'evy motion becomes

\begin{equation}\label{FPE1Dasy}
   p_t=-(c(x)p)_x+\frac{\sigma^2}{2}p_{xx}+\varepsilon\int_{\mathbb{R}\setminus \{0\}}
   [p(x+y,t)-p(x,t)-1_{\{|y|<b\}}yp_x]\nu_{\a, -\b}(\mathrm{d}y),
\end{equation}
where
\bear
    c(x) =
    \begin{cases}
        f(x) + \eps K_{\a,\b} + \eps(C_p-C_n)\frac{b^{1-\a}-1}{1-\a}\;,   &\text{ $ \a \neq 1 $;}\\
        f(x) + \eps K_{\a,\b} + \eps(C_p-C_n)\ln{b},  \;  &\text{ $ \a = 1$,}
    \end{cases}
\label{eq.cs}
\enar
with the constants $C_p$ and $C_n$ defined by (\ref{C1C2}).

\begin{remark}
If $\beta=0$, the process $L_t$ becomes symmetric L\'evy motion,
then $c(x)=f(x)$,
$\displaystyle{\nu_{\a, -\b}(\mathrm{d}y)=\frac{C_\a}{|y|^{1+\a}} \dy}$ and $\mathscr{L}_{\a,\b}$ is self-adjoint.
\end{remark}

\subsection{Auxiliary conditions}\label{sec.ac}
 For the solution of Eq.~(\ref{FPE1Dasy}), we specify auxiliary conditions. There are several various boundary conditions
 for Brownian motion, such as reflecting barrier,  absorbing barrier, periodic boundary condition and so on\cite{Gardiner2004}.
 We consider two cases of processes governed by the SDE (\ref{SDE01}). One is that the process $X_t$ disappears or is killed when $X_t$ is outside
 a bounded domain $D=(-b,b)$. In this case, we have the absorbing condition, i.e. the probability $p(x,t)$
of being outside of the bounded domain $D=(-b,b)$ is zero:
\bear
    p(x,t) = 0, \;\;\; x \notin (-b,b).
\label{eq.ec}
\enar
 We can also extend the domain to general unsymmetric domains.

The other case is that the process $X_t$ can go anywhere on the entire real line $\mathbb{R}$.
In this case, we call it the natural condition. The probability density satisfies
\bess
    \int_{-\infty}^{\infty} p(x,t) \dx  = 1, \;\;\; \forall t\geq 0.
\eess

\section{Numerical schemes}
\setcounter{equation}{0}
\subsection{Simplification}
First, we show the following symmetry for the solution of FPE~(\ref{FPE1Dasy}).

\begin{prop} [Symmetry of Solutions]

If $f(x)$ is an odd function and the domain $D$ is symmetric
($D=(-b,b)$), then the solution $p(x,t)$ of the Fokker-Planck eqaution~(\ref{FPE1Dasy})
is symmetric about the origin for any given time $t$ if $\beta$ changes the sign, i.e. $p(-x,t;-\beta) = p(x,t;\beta)$
for all $x\in (-b, b)$ where $p(x,t;\beta)$ and $p(x,t;-\beta)$ denote the solutions corresponding to $\beta$ and $-\beta$ respectively.
\end{prop}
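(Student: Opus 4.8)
The plan is to prove the symmetry by a reflection-plus-uniqueness argument. I would define the reflected candidate $\tilde p(x,t) := p(-x,t;-\beta)$ and show that it solves the Fokker--Planck equation \eqref{FPE1Dasy} associated with the parameter $+\beta$, carrying the same auxiliary data. Once this is established, uniqueness of the solution to \eqref{FPE1Dasy} (guaranteed by the well-posedness results cited in the introduction) forces $\tilde p(x,t)=p(x,t;\beta)$, which is precisely the claimed identity $p(-x,t;-\beta)=p(x,t;\beta)$.

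First I would record the two elementary symmetries that drive everything. From \eqref{C1C2} one has $C_p(-\beta)=C_n(\beta)$ and $C_n(-\beta)=C_p(\beta)$, so that the reflection $y\mapsto -y$ carries $\nu_{\alpha,\beta}$ into $\nu_{\alpha,-\beta}$; this is exactly the change of variables already performed in \eqref{adjoint}. From \eqref{K} one has $K_{\alpha,-\beta}=-K_{\alpha,\beta}$, and since $C_p(-\beta)-C_n(-\beta)=-(C_p(\beta)-C_n(\beta))$, the $x$-independent constant that is added to $f$ in \eqref{eq.cs} simply flips sign under $\beta\mapsto-\beta$. Writing $c(x)=f(x)+g_\beta$ with $g_\beta$ this constant, so that $g_{-\beta}=-g_\beta$, the oddness of $f$ then yields the crucial drift identity $c_{-\beta}(-x)=f(-x)-g_\beta=-f(x)-g_\beta=-c_\beta(x)$.

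Next I would substitute $z=-x$ into the equation satisfied by $u(z,t):=p(z,t;-\beta)$ and re-express each term through $\tilde p(x,t)=u(-x,t)$. By the chain rule $\tilde p_x(x,t)=-u_z(-x,t)$ and $\tilde p_{xx}(x,t)=u_{zz}(-x,t)$, so the diffusion term $\frac{\sigma^2}{2}u_{zz}$ is preserved, and, setting $\Phi(z)=c_{-\beta}(z)u(z,t)$ and using $c_{-\beta}(-x)=-c_\beta(x)$, the transport term $-\Phi_z|_{z=-x}$ becomes $-(c_\beta\tilde p)_x$. For the nonlocal term I would apply the change of variable $y\mapsto -y$ exactly as in \eqref{adjoint}: this turns $\nu_{\alpha,\beta}(\mathrm{d}y)$ into $\nu_{\alpha,-\beta}(\mathrm{d}y)$, sends $u(-x+y,t)$ to $\tilde p(x+y,t)$, and, since $u_z(-x,t)=-\tilde p_x(x,t)$, sends the truncated first-order correction $-1_{\{|y|<b\}}y\,u_z(-x,t)$ to $-1_{\{|y|<b\}}y\,\tilde p_x(x,t)$. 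Reassembling the three pieces shows that $\tilde p$ solves \eqref{FPE1Dasy} with parameter $+\beta$.

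Finally I would match the auxiliary data. In the absorbing case \eqref{eq.ec}, symmetry of $D=(-b,b)$ gives $\tilde p(x,t)=u(-x,t)=0$ for $x\notin(-b,b)$, so the same boundary condition holds; in the natural case the normalization is preserved because reflection is measure preserving. At $t=0$ one has $\tilde p(x,0)=p_0(-x)$, so the initial data agree provided $p_0$ is even, which I would record as the standing assumption of a symmetric initial distribution (otherwise the asserted identity already fails at $t=0$). The main obstacle is the bookkeeping inside the nonlocal term: one must verify that the single substitution $y\mapsto -y$ simultaneously converts $\nu_{\alpha,\beta}\to\nu_{\alpha,-\beta}$ and flips the sign of $y$ in the truncation factor in a manner consistent with the sign produced by differentiating the reflected argument, so that all three summands of the integrand recombine into the integrand of the $+\beta$ equation. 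Once the measure identity behind \eqref{adjoint} is invoked, this is the only delicate point, and the remaining steps are routine applications of the chain rule and the uniqueness of the solution.
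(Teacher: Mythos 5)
Your proposal is correct and follows essentially the same route as the paper's own proof: reflect via $\tilde p(x,t)=p(-x,t;-\beta)$, use the drift identity $c(-x;-\beta)=-c(x;\beta)$ (valid for odd $f$) together with the substitution $y\mapsto -y$ that carries $\nu_{\a,\b}$ into $\nu_{\a,-\b}$ to show $\tilde p$ solves \eqref{FPE1Dasy} with parameter $+\beta$, and conclude by uniqueness. Your explicit remark that the initial datum $p_0$ must be even (so that the reflected and original problems carry the same data) is a correct and necessary hypothesis that the paper leaves implicit.
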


\emph{Proof.}
   From the Eq.~(\ref{FPE1Dasy}), $p(-x,t;-\beta)$ satisfies
\bear
   p_t(-x,t;-\beta)&=&-(cp)_x(-x,t;-\b)+\frac{\sigma^2}{2}p_{xx}(-x,t;-\beta) \\
 && +\varepsilon\int_{\mathbb{R}\setminus \{0\}}
   [p(-x+y,t;-\beta)-p(-x,t;-\beta)-1_{\{|y|<b\}}yp_x(-x,t;-\beta)]\nu_{\a, \b}(\mathrm{d}y).  \nonumber
\enar
By the definition of $c(x)$ in (\ref{FPE1Dasy}), we have $c(-x;-\beta) = - c(x;\beta)$ if $f(x)$ is an odd function.\\
Denote $\tilde{p}(x,t) = p(-x,t;-\beta)$, then
\bear
    \tilde{p}_x(x,t)=-p_x(-x,t;-\beta),\; \tilde{p}_{xx}(x,t)=p_{xx}(-x,t;-\beta).
\enar \label{th11}
Taking $y' = -y$, we have
\bear \label{eq:th1}
    &&\int_{\R \setminus\{0\}} [p(-x+y,t;-\b)-p(-x,t;-\b) -  I_{\{|y|< b \}}(y) \; y p_x(-x,t;-\b)] \nu_{\a, \beta}( {\rm d}y) \nonumber\\
     &=& \int_{\R \setminus\{0\}} [\tilde{p}(x-y,t)-\tilde{p}(x,t) +  I_{\{|y|< b \}}(y) \; y \tilde{p}_x(x,t)]
        \left[\frac{C_p 1_{\{0<y<\infty\}}+C_n 1_{\{-\infty<y<0\}}}{|y|^{1+\alpha}}\right]\; {\rm d}y  \nonumber\\
     &=& \int_{\R \setminus\{0\}} [\tilde{p}(x+y',t)-\tilde{p}(x,t) -  I_{\{|y'|< b \}}(y') \; y' \tilde{p}_x(x,t)]
       \nu_{\a,-\b}({\rm d}y').
\enar
From the property of $c(x)$, Eqs.~(\ref{th11}) and (\ref{eq:th1}), we finally have
\bear
   \tilde{p}_t(x,t)&=&-(c(x;\beta)\tilde{p}(x,t))_x+\frac{\sigma^2}{2}\tilde{p}_{xx}(x,t) \\
 && +\varepsilon\int_{\mathbb{R}\setminus \{0\}}
   [\tilde{p}(x+y,t)-\tilde{p}(x,t)-1_{\{|y|<b\}}y\tilde{p}_x(x,t)]\nu_{\a, -\b}(\mathrm{d}y)  \nonumber
\enar
Then, we get the conclusion by the uniqueness of the solution to (\ref{FPE1Dasy}).
\begin{flushright}
   $ \Box $
\end{flushright}

For convenience, we convert the domain $D=(-b,b)$ to the standard domain $(-1,1)$ by the variable transformation

\begin{equation}
s = x/b, \quad \text{ and } v(s,t) := p(bs,t).
\label{eq.cov}
\end{equation}

Then, Eq.~(\ref{FPE1Dasy}) changes to
\begin{eqnarray}  \label{asymmetricEq3}
  v_t=&-&\frac{1}{b} (c(bs)v)_s + \frac{\s^2}{2b^2} v_{ss} \\
 &+&\eps b^{-\a} \int_{\R \setminus\{0\}} [v(s+r,t)-v(s,t)
     -  I_{\{|r|< 1 \}}(r) \; r v_s(s,t)]
    \left[\frac{C_n 1_{\{0<r<\infty\}}+C_p 1_{\{-\infty<r<0\}}}{|r|^{1+\alpha}}\right] {\rm d}r  \nonumber.
\end{eqnarray}

Next, we present the numerical schemes for the absorbing boundary condition.
By using the absorbing boundary condition, i.e. the probability density
$v(x,t)$ vanish outside of the standard domain $D=(-1,1)$. The above equation (\ref{asymmetricEq3}) containing the  singular integral
is  simplified to the following equations\cite{Xiao2016}.

For $s<0$,
\begin{eqnarray} \label{fpk_asy1}
    v_t= &&\frac{\sigma^2}{2b^2}v_{ss}-\frac{1}{b}(c(bs)v)_s+\varepsilon b^{-\a} C_pg(s)v_s-\varepsilon b^{-\a}\frac{v}{\a}\left[\frac{C_n}{(1-s)^\a}
    +\frac{C_p}{(1+s)^\a}\right]  \nonumber\\
    &+&\varepsilon b^{-\a}C_n \int_0^1\frac{v(s+y,t)-v(s,t)-yv_s(s,t)}{y^{1+\a}}\mathrm{d}y+
    \varepsilon b^{-\a}C_n\int_1^{1-s}\frac{v(s+y,t)-v(s,t)}{y^{1+\a}}\mathrm{d}y  \nonumber\\
    &+&\varepsilon b^{-\a}C_p \int_0^{1+s}\frac{v(s-y,t)-v(s,t)+yv_s(s,t)}{y^{1+\a}}\mathrm{d}y;
\end{eqnarray}

For $s\geq0$,
\begin{eqnarray} \label{fpk_asy2}
    v_t= &&\frac{\sigma^2}{2b^2}v_{ss}-\frac{1}{b}(c(bs)v)_s-\varepsilon b^{-\a} C_ng(s)v_s-\varepsilon b^{-\a} \frac{v}{\a}\left[\frac{C_n}{(1-s)^\a}
    +\frac{C_p}{(1+s)^\a}\right] \nonumber\\
    &+&\varepsilon b^{-\a} C_n \int_0^{1-s}\frac{v(s+y,t)-v(s,t)-yv_s(s,t)}{y^{1+\a}}\mathrm{d}y+
    \varepsilon b^{-\a} C_p\int_1^{1+s}\frac{v(s-y,t)-v(s,t)}{y^{1+\a}}\mathrm{d}y \nonumber\\
    &+&\varepsilon b^{-\a} C_p \int_0^1\frac{v(s-y,t)-v(s,t)+yv_s(s,t)}{y^{1+\a}}\mathrm{d}y.
\end{eqnarray}
Here,
\bear
    g(s) =
    \begin{cases}
        \frac{1-(1-|s|)^{1-\a}}{1-\a}\;,   &\text{ $ \a \neq 1 $;}\\
         -\ln(1-|s|),  \;  &\text{ $ \a = 1$.}
    \end{cases}
\label{g1}
\enar

Assuming the drift term $f$ is differentiable, we change the Eqs.~(\ref{fpk_asy1}) and (\ref{fpk_asy2})
to the following.

For $s<0$,
\begin{eqnarray} \label{FPKchange1}
    v_t= &&\frac{\sigma^2}{2b^2}v_{ss}(s,t)-m_1(s)v_s(s,t)- m_2(s)v(s,t) \nonumber\\
    &+&\varepsilon b^{-\a}C_n \int_0^1\frac{v(s+y,t)-v(s,t)-yv_s(s,t)}{y^{1+\a}}\mathrm{d}y+
    \varepsilon b^{-\a}C_n\int_1^{1-s}\frac{v(s+y,t)-v(s,t)}{y^{1+\a}}\mathrm{d}y  \nonumber\\
    &+&\varepsilon b^{-\a}C_p \int_0^{1+s}\frac{v(s-y,t)-v(s,t)+yv_s(s,t)}{y^{1+\a}}\mathrm{d}y;
\end{eqnarray}

for $s\geq0$,
\begin{eqnarray} \label{FPKchange2}
    v_t=  &&\frac{\sigma^2}{2b^2}v_{ss}(s,t)-m_1(s)v_s(s,t)- m_2(s)v(s,t) \nonumber \\
    &+&\varepsilon b^{-\a} C_n \int_0^{1-s}\frac{v(s+y,t)-v(s,t)-yv_s(s,t)}{y^{1+\a}}\mathrm{d}y+
    \varepsilon b^{-\a} C_p\int_1^{1+s}\frac{v(s-y,t)-v(s,t)}{y^{1+\a}}\mathrm{d}y \nonumber\\
    &+&\varepsilon b^{-\a} C_p \int_0^1\frac{v(s-y,t)-v(s,t)+yv_s(s,t)}{y^{1+\a}}\mathrm{d}y.
\end{eqnarray}
where
\bear
    m_1(s) =
    \begin{cases}
        \frac{c(bs)}{b}-\varepsilon b^{-\a}C_p g(s),   &\text{ $ s<0 $;}\\
         \frac{c(bs)}{b}+\varepsilon b^{-\a}C_n g(s),  \;  &\text{ $ s\geq0$,}
    \end{cases}
\label{m1}
\enar
and
\bear
    m_2(s) = c'(bs)+ \frac{\varepsilon b^{-\a}}{\a}\left[\frac{C_n}{(1-s)^\a}
    +\frac{C_p}{(1+s)^\a}\right].
\label{eq.m2}
\enar
\subsection{Discretization}

We aim to solve the FPE \eqref{FPKchange1} and \eqref{FPKchange2} numerically.
First, we denote $V_j$ as the numerical solution of $v$ at $(s_j,t)$,
where $s_j=jh$ for $j\in \mathbb{Z}$ and $h = \frac1J$.
Due to the absorbing condition, $V_j=0$ for $|j|\leq J$
and we denote the unknowns $V_j$ for $-J< j< J$
by the vector $\mathbf{V}:= (V_{-J+1}, V_{-J+2}, \cdots, V_{J-1})^T$.
Second, we approximate the diffusion term by the second-order central differencing and the first-order derivatives by the first-order upwind scheme.
Denoting
\bear
     \delta_u V_j=
    \begin{cases}
       \frac{V_{j}-V_{j-1}}{h}\; ,   & \text{ if $ m_1(s_j) > 0 $;}\\
       \frac{V_{j+1}-V_{j}}{h}\; ,   & \text{ if $ m_1(s_j) < 0 $},
    \end{cases}
\label{eq.uw}
\enar
we discretize the non-integral terms in the RHS of Eqs.~\eqref{FPKchange1} and \eqref{FPKchange2} as
\bear
(B\mathbf{V})_j:= C_h\frac{V_{j+1}-2V_j+V_{j-1}}{h^2} - m_1(s_j) \delta_u V_j
     -m_2(s_j)V_j,
\label{eq.B}
\enar
where
\begin{equation}
   C_h=\frac{\sigma^2}{2b^2}-\frac{\varepsilon b^{-\a}}{2} C_\a \zeta(\a-1)h^{2-\a}.
\label{eq.Ch}
\end{equation}
The second term in $C_h$ is the leading-order
correction term for the trapezoidal rules of the singular integrals in
Eqs.~(\ref{FPKchange1}) and (\ref{FPKchange2}) given below,
and $\zeta$ is the Riemann zeta function.
Eq.~\eqref{eq.B} defines a linear operator or a $(2J-1)$-by-$(2J-1)$ matrix $B$.

Third, the integrals in Eqs.~(\ref{FPKchange1}) and (\ref{FPKchange2})
are approximated by the trapezoidal rule
\bear
(S\mathbf{V})_j :=
    \begin{cases}
     & \eps b^{-\a} C_n h\sum\!{'}_{k=j+1}^{J+j} \frac{V_k-V_j-(s_k-s_j)\frac{V_{j}-V_{j-1}}{h}}{(s_k-s_j)^{\a+1}}
    + \eps b^{-\a} C_n h\sum_{k=J+j}^{J}\;\;\!\!\!\! \frac{V_k-V_j}{(s_k-s_j)^{1+\a}} \\
    &+ \eps b^{-\a} C_p h\sum\!{''}_{k=-J}^{j-1} \frac{V_k-V_j-(s_k-s_j)\frac{V_{j+1}-V_{j}}{h}}{(s_j-s_k)^{\a+1}},
     \quad \text{ for $-J+1\leq j\leq -1$,}  \\
     &  \eps b^{-\a} C_n h\sum\!{'}_{k=j+1}^{J} \frac{V_k-V_j-(s_k-s_j)\frac{V_{j}-V_{j-1}}{h}}{(s_k-s_j)^{\a+1}}
	    + \eps b^{-\a} C_p h\sum_{k=-J}^{-J+j}\;\;\!\!\!\! \frac{V_k-V_j}{(s_j-s_k)^{1+\a}}  \\
    &+ \eps b^{-\a} C_p h\sum\!{''}_{k=-J+j}^{j-1} \frac{V_k-V_j-(s_k-s_j)
    \frac{V_{j+1}-V_{j}}{h}}{(s_j-s_k)^{\a+1}},
     \quad \text{for $0\leq j\leq J-1$ .} \\
    \end{cases}
\label{eq.S}
\enar
The summation symbol $\sum$ means the terms of both end indices are multiplied by $\frac12$,
$\sum\!{'}$($\sum\!{''}$ ) means that only the term of the top (bottom) index is multiplied by $\frac12$.
Eq.~\eqref{eq.S} defines another linear operator or $(2J-1)$-by-$(2J-1)$ matrix $S$.
Note the trapezoidal rule in \eqref{eq.S} would induce significant error due
to the singular nature of the integrals and the dominant error is eliminated
by the second term in \eqref{eq.Ch}.  \cite{Xiao2016,Sidi}

Now, we present our semi-discrete scheme for solving the FPE \eqref{FPKchange1} and (\ref{FPKchange2})
\begin{equation}
	\frac{{\rm d}V_j}{{\rm d}t} =  (A\mathbf{V})_j,  \quad \text{ where }
	A := B + S,
\label{eq.FPEsd}
\end{equation}
for $-J+1 \leq j \leq J-1$.

\begin{remark}\label{rem.nc}
We point out that our semi-discrete scheme for the natural condition will
be the same as (\ref{eq.FPEsd}) except $m_2(s)$ simplifies to $c'(bs)$.
\end{remark}

\subsection{Fast Algorithm}

The summation terms in the discretized scheme \eqref{eq.FPEsd}
can be written in matrix-vector multiplication form
$S\mathbf{V}$ as given by \eqref{eq.S}.
We decompose the matrix $S$ as the summation of a Toeplitz matrix $T_S$ and a tridiagonal matrix $D_S$
\bear
    S = T_S + D_S,
\label{eq.dS}
\enar
where
\bear
T_S =
\begin{pmatrix}
0 & \frac{\tilde{C_n}}{h^{1+\a}} & \frac{\tilde{C_n}}{(2h)^{1+\a}} &\cdots & \frac{\tilde{C_n}}{[(2J-2)h]^{1+\a}}\\
\frac{\tilde{C_p}}{h^{1+\a}} & 0 &\frac{\tilde{C_n}}{h^{1+\a}} &\cdots & \frac{\tilde{C_n}}{[(2J-3)h]^{1+\a}}\\
\frac{\tilde{C_p}}{(2h)^{1+\a}} & \frac{\tilde{C_p}}{h^{1+\a}} & 0  & \ddots & \frac{\tilde{C_n}}{[(2J-4)h]^{1+\a}} \\
\vdots & \vdots & \ddots & \ddots & \vdots\\
 \frac{\tilde{C_p}}{[(2J-2)h]^{1+\a}} &  \frac{\tilde{C_p}}{[(2J-3)h]^{1+\a}} & \frac{\tilde{C_p}}{[(2J-4)h]^{1+\a}} & \cdots & 0
\end{pmatrix},
\enar
\bear
D_{S} =
\begin{pmatrix}
 a_{1} & p_{1} &    & &\\
  b_{2} & a_2 & p_{2}  & &\\
  & \ddots  & \ddots & \ddots &\\
  & &b_{2J-2} & a_{2J-2}& p_{2J-2} \\
 & &  & b_{2J-1} & a_{2J-1}\\
\end{pmatrix}.
\enar
Here
\bear
   \tilde{C_p} = \eps b^{-\a}C_p h, \quad \tilde{C_n} = \eps b^{-\a}C_n h,
\enar
and
\bess
    a_{J+j} &=& - \tilde{C_n} \Big[\sum_{k=j+1}^{J}\!\!\!{'} \frac{1}{(s_k-s_j)^{\a+1}}\Big] -
     \tilde{C_p}  \Big[\sum_{k=-J}^{j-1}\!\!\!{''} \frac{1}{(s_j-s_k)^{\a+1}}\Big]  \\
     &&- \begin{cases}
       \frac{\tilde{C_n} }{h} \sum{'}_{k=j+1}^{J+j}  \frac{1}{(s_k-s_j)^{\a}}
   +\frac{\tilde{C_p}}{h} \sum{''}_{k=-J}^{j-1} \frac{1}{(s_j-s_k)^{\a}},\qquad\text{for}\; j=1-J,2-J,\cdots,0,\\
       \frac{\tilde{C_n} }{h} \sum{'}_{k=j+1}^{J} \frac{1}{(s_k-s_j)^{\a}}
   +\frac{\tilde{C_p} }{h} \sum{''}_{k=-J+j}^{j-1} \frac{1}{(s_j-s_k)^{\a}},\quad\text{for} \;j=1,J+2,\cdots,J-1,
    \end{cases}
\eess
\bess
    p_{J+j} &=&
    \begin{cases}
      \frac{\tilde{C_p}}{h} \sum{''} _{k=-J}^{j-1}\frac{1}{(s_j-s_k)^{\a}},\qquad\text{for}\; j=1-J,2-J,\cdots,0,\\
       \frac{\tilde{C_p} }{h} \sum{''}_{k=-J+j}^{j-1} \frac{1}{(s_j-s_k)^{\a}},\quad\text{for} \;j=1,2,\cdots,J-1,
    \end{cases}
\eess \label{bj1}
\bess
    b_{J+j} &=&
    \begin{cases}
       \frac{\tilde{C_n} }{h} \sum{'}_{k=j+1}^{J+j} \frac{1}{(s_k-s_j)^{\a}},\qquad\text{for}\; j=1-J,2-J,\cdots,0,\\
        \frac{\tilde{C_n} }{h} \sum{'}_{k=j+1}^{J} \frac{1}{(s_k-s_j)^{\a}},\quad\text{for} \;j=1,2,\cdots,J-1.
    \end{cases}
\eess \label{bj2}

The direct summation of $S\mathbf{V}$ causes the computational complexity
of the scheme \eqref{eq.FPEsd} to be $O(J^2)$. Realizing that
the dominant computational cost comes from Toeplitz matrix-vector
multiplications, we develop our fast algorithm based on
the well-known algorithm of $O(J\log J)$ for multiplying a vector
by a Toeplitz matrix \cite{golub2012matrix}.

Next, we compare the CPU times using fast algorithm with those
using direct summation in Table~\ref{RunTime}
for the case of $\a=0.5, \beta=0.5, f\equiv 0, d=0,\e=1$ with $\triangle t=\frac{1}{1600}$ and different
resolutions $J$ from time $0$ to $ t_F=1$. The initial condition
is a Gaussian density function $p(x,0) = \sqrt{\frac{40}{\pi}}e^{-40x^2}$.
The numerical schemes, implemented in MATLAB, were excuted on
a desktop PC with 3.6~GHz Intel Core i7-4790 processor and 8GB RAM.
\begin{table}[!hbp]
\centering
\begin{tabular}{c|c|c|c|c}

\hline

\hline
 $J$ &  100    &   200 & 400 & 800          \\
\hline
 CPU times (sec) with direct summation &  15.85   &   50.99 & 179.25 & 671.47  \\
\hline
 CPU times (sec) using the fast algorithm  &  0.13    &   0.16 & 0.34 & 0.79   \\
\hline
\end{tabular}
\caption{Comparison of the CPU times of computing the solution of the FPE with and without the fast algorithm
for the case $\a=0.5, \beta = 0.5, f\equiv 0, d=0, \e=1$ and different $J$'s.}
\label{RunTime}
\end{table}

From the results in Table~\ref{RunTime}, the CPU time for the scheme with the fast algorithm increases as $O(J\log J)$
while the scheme with the direct summation grows quadratically in $J$. This behavior agrees with the theoretical analysis
of the complexity of the algorithms.

\section{Maximum principle}
\setcounter{equation}{0}

In this section, we will show that the semi-discrete scheme \eqref{eq.FPEsd}
satisfies the discrete maximum principle under the condition
the function $m_2(s)$ defined in \eqref{eq.m2} is non-negative for $|s|<1$:
the solution to \eqref{eq.FPEsd}
reaches its maximum and minimum outside the solution domain,
i.e. in $\partial I_{h,t_F}$ defined in
\bear
    I_h = \{j\in \mathbb{Z}:|jh|<1\}, \quad
    I_{h,t_F}=I_h \times (0,t_F],
    \quad
    \partial I_{h,t_F} = \mathbb{Z} \times [0,t_F] \setminus I_{h,t_F},
\enar
where $t_F>0$ is any fixed final time.
We point out the unusual definition of the "boundary" of the solution
domain due to the nonlocal exterior absorbing condition \eqref{eq.ec}.
For weak and strong maximum principles for the original equation \eqref{eq.fpeas} or nonlocal Waldenfels operator, we refer to the work \cite{qiao16}.

\begin{prop} [Maximum principle for the absorbing condition] \label{propMax}

  Assume $m_2(s) \geq 0$ for $s\in (-1,1)$.
  \begin{enumerate}[label=(\roman*)]
   \item\label{prop.1}  If
\begin{equation}
   \frac{{\rm d}V_j}{{\rm d}t} -  (A\mathbf{V})_j \leq 0
   \text{  for } (j,t)\in I_{h,t_F}  \quad \text{ and } \quad V_j = 0
   \text{ for } |j|\geq J,
\label{eq.mp1}
\end{equation}
then
\begin{equation}
	\max_{(j,t)\in \mathbb{Z}\times [0,t_F]} V_j(t) = \max_{(j,t)\in \partial I_{h,t_F}} V_j(t).
\label{eq.mp2}
\end{equation}
    \item\label{prop.2}  If
\begin{equation}
   \frac{{\rm d}V_j}{{\rm d}t} -  (A\mathbf{V})_j \geq 0
   \text{  for } (j,t) \in I_{h,t_F}  \quad \text{ and } \quad V_j = 0
   \text{ for } |j|\geq J,
\label{eq.mp3}
\end{equation}
then
\begin{equation}
	\min_{(j,t)\in \mathbb{Z}\times [0,t_F]} V_j(t) = \min_{(j,t)\in \partial I_{h,t_F}} V_j(t).
\label{eq.mp4}
\end{equation}
\end{enumerate}
\end{prop}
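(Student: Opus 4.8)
The plan is to recast the semidiscrete system \eqref{eq.FPEsd} in the form $\frac{dV_j}{dt}=\sum_{|k|<J}A_{jk}V_k$ for $-J+1\le j\le J-1$ and to prove two structural properties of the matrix $A=B+S$, from which both parts follow by a short discrete extremum argument. First I would show that $A$ has nonnegative off-diagonal entries, $A_{jk}\ge 0$ for all interior $k\ne j$. For the local part $B$ this rests on $C_h>0$, which holds because $C_\a>0$ in \eqref{C_alpha} and $\zeta(\a-1)<0$ for $0<\a<2$ make the correction term in \eqref{eq.Ch} positive; the upwind choice \eqref{eq.uw} is precisely what guarantees that the convective term contributes a nonnegative coefficient to the neighbour it references, while $-m_2(s_j)V_j$ touches only the diagonal. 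For the nonlocal part the off-diagonals are the entries of $T_S$ and of the tridiagonal $D_S$, all nonnegative multiples of $\tilde C_p,\tilde C_n\ge 0$ (using $-1\le\b\le1$), and the derivative-correction terms cancel when evaluated on constants.

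Second, I would establish that the interior row sums are strictly negative, $r_j:=\sum_{|k|<J}A_{jk}<0$. Applying $A$ to the interior constant vector, diffusion and upwind annihilate constants, $(B\mathbf{1})_j=-m_2(s_j)$, and within the interior every summand of the quadrature \eqref{eq.S} vanishes on constants; the only surviving contribution comes from those nodes that reach the absorbing indices $k=\pm J$, where $V_{\pm J}=0$. For each interior $j$ at least one such boundary node carries a strictly positive weight ($\tfrac12\,\e b^{-\a}C_n h\,(s_J-s_j)^{-1-\a}$, or its $C_p$ analogue reaching $k=-J$), and since $C_\a>0$ forces at least one of $C_n,C_p$ to be positive even at the skewed endpoints $\b=\pm1$, this escape weight never degenerates. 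Hence $r_j=-m_2(s_j)-(\text{positive escape weight})<0$, using $m_2\ge0$; this strict loss of mass to the exterior is the analytic heart of the result.

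With these two facts, part \ref{prop.1} proceeds as follows. Since $V_j\equiv0$ for $|j|\ge J$, the maximum over $\mathbb{Z}\times[0,t_F]$ is attained on the compact set of interior indices times $[0,t_F]$, and it is $\ge0$ because the exterior nodes (where $V=0$) lie in the domain. If the maximum $M$ is attained at an interior point $(j_0,t_0)\in I_{h,t_F}$, then $t_0\in(0,t_F]$ gives $\frac{dV_{j_0}}{dt}(t_0)\ge0$, so \eqref{eq.mp1} yields $(A\mathbf{V})_{j_0}(t_0)\ge0$. On the other hand, writing $(A\mathbf{V})_{j_0}=\sum_{k\ne j_0}A_{j_0k}(V_k-M)+r_{j_0}M$, the sum is $\le0$ (nonnegative weights, $V_k\le M$) and $r_{j_0}M\le0$ (as $r_{j_0}<0$, $M\ge0$), so $(A\mathbf{V})_{j_0}(t_0)\le0$. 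Thus $(A\mathbf{V})_{j_0}=0$, forcing $r_{j_0}M=0$, and since $r_{j_0}<0$ this gives $M=0$. Either the maximum already lies on $\partial I_{h,t_F}$, or $M=0$; because $\max_{\partial I_{h,t_F}}V_j\ge0$ always (the exterior zeros belong to the boundary), \eqref{eq.mp2} holds in both cases.

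Part \ref{prop.2} is the mirror image: the exterior zeros force $\min_{\mathbb{Z}\times[0,t_F]}V_j\le0$; at an interior minimum $\frac{dV_{j_0}}{dt}(t_0)\le0$ combines with \eqref{eq.mp3} to give $(A\mathbf{V})_{j_0}\le0$, while the sign structure ($V_k\ge m$, $r_{j_0}<0$, $m\le0$) gives $(A\mathbf{V})_{j_0}\ge0$, forcing $r_{j_0}m=0$ and hence $m=0$, which yields \eqref{eq.mp4}. The extremum bookkeeping is routine; I expect the main obstacle to be the structural lemma, specifically verifying the off-diagonal sign across both regimes $s<0$ and $s\ge0$ of \eqref{eq.S} and confirming the strict negativity of $r_j$ uniformly in $j$ and in $\b\in[-1,1]$, including the endpoints $\b=\pm1$ where one of $C_p,C_n$ vanishes.
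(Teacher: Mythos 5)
Your proof is correct, but it takes a genuinely different route from the paper's. The paper argues by contradiction at a putative interior maximum $(j^*,t^*)$: it first assumes the strict inequality $\frac{{\rm d}V_j}{{\rm d}t}-(A\mathbf{V})_j<0$, checks term by term that at the maximum every quadrature summand in \eqref{eq.S} is non-positive (since $V_k\le V_{j^*}$ and the upwind correction terms have a definite sign) and that each piece of $(B\mathbf{V})_{j^*}$ is non-positive (using $C_h>0$, the upwind choice \eqref{eq.uw}, $m_2\ge 0$ and $V_{j^*}\ge 0$), obtaining $(A\mathbf{V})_{j^*}\le 0$ and a contradiction; it then removes the strictness by perturbing $\mathbf{V}^\delta=\mathbf{V}-\delta t$ and letting $\delta\to 0$, and deduces part (ii) by applying part (i) to $-\mathbf{V}$. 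You instead prove a global structural (M-matrix) lemma --- nonnegative off-diagonal entries of $A=B+S$ together with strictly negative interior row sums $r_j=-m_2(s_j)-(\text{escape weights at } k=\pm J)$ --- and finish in one pass through the identity $(A\mathbf{V})_{j_0}=\sum_{k\ne j_0}A_{j_0k}(V_k-M)+r_{j_0}M$, with no perturbation and no contradiction scaffolding; both of your structural claims check out against \eqref{eq.S} and \eqref{eq.B}, including the endpoint cases $\beta=\pm 1$. What each approach buys: yours yields the slightly stronger conclusion that an interior extremum forces $M=0$, and it isolates exactly the fact the paper's perturbation step silently needs --- the paper's assertion $A\mathbf{V}^\delta=A\mathbf{V}$ is not literally true, since $A$ does not annihilate constants (the $m_2$-term contributes $-m_2(s_j)$ and the half-weighted quadrature terms at $k=\pm J$ contribute the escape weights); what actually makes the perturbation work is precisely your row-sum inequality $A\mathbf{1}\le 0$, so your lemma repairs that step. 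Conversely, the paper's termwise sign inspection is purely local at the extremum and never needs \emph{strict} negativity of $r_j$: your strictness $r_j<0$ implicitly assumes $\varepsilon>0$ (as you note, at $\beta=\pm1$ one of $C_p,C_n$ vanishes but not both, since $C_p+C_n=C_\alpha>0$), whereas if $\varepsilon=0$ the escape weights vanish and $r_j=-m_2(s_j)$ may be zero, in which case your final step $r_{j_0}M=0\Rightarrow M=0$ gives no information and you would have to fall back on a perturbation; this degenerate case is outside the paper's setting, so it is a caveat rather than a gap. Both routes ultimately rest on the same three sign inputs: $C_h>0$ from $C_\alpha>0$ and $\zeta(\alpha-1)<0$ in \eqref{eq.Ch}, the upwind discretization \eqref{eq.uw}, and the hypothesis $m_2\ge 0$.
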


\begin{proof}

\begin{enumerate}
   \item
Let us suppose
\begin{equation}
   \frac{{\rm d}V_j}{{\rm d}t} -  (A\mathbf{V})_j < 0
   \text{  for } (j,t)\in I_{h,t_F}  \quad \text{ and } \quad V_j = 0
   \text{ for } |j|\geq J,
\label{eq.mp5}
\end{equation}
but there exist a point $(j^*,t^*) \in I_{h,t_F}$ such that
$\displaystyle{ V_{j^*}(t^*) = \max_{\mathbb{Z}\times [0,t_F]} V_j}$.

   \item \label{item.p2}
If $0<t^*<t_F$, then
\begin{equation}
	\frac{{\rm d}V_{j^*}}{{\rm d}t} = 0 \text{   at } t=t^*.
\label{eq.cp}
\end{equation}
On the other hand,  we have, at $t=t^*$, $\displaystyle{(s_k-s_{j^*})\frac{V_{j^*}-V_{j^*-1}}{h} \geq  0}$ for $j^*+1\leq k\leq J$
and $\displaystyle{(s_k-s_{j^*}) \frac{V_{j^*+1}-V_{j^*}}{h} \geq 0 }$ for $-J\leq k \leq j^*-1$ because $V_{j^*}(t*)$ is the maximum. Consequently,
each summation term in \eqref{eq.S} is non-positive, resulting
$(S\mathbf{V})_{j^*} \leq 0$ at $t=t^*$. Due to the fact that
$V_{j^*}(t*)$ is the maximum,
$V_{j^*+1} - 2V_{j^*} + V_{j^*-1} \leq 0$,
$m_1(s_{j^*})\delta_u V_{j^*} \geq 0$ from the upwind scheme \eqref{eq.uw},
$m_2(s_{j^*}) V_{j^*} \geq 0$ from the assumption $m_2(s) \geq 0$ and
$V_{j^*} \geq 0 = V_k$ for $k \geq J$, resulting $(B\mathbf{V})_{j^*} \leq 0 $
at $t=t^*$.  Therefore, together with \eqref{eq.cp}, we have
$\displaystyle{\frac{{\rm d}V_{j^*}}{{\rm d}t} - (A\mathbf{V})_{j^*} \geq 0 }$
at $t=t^*$, which is a contradiction to \eqref{eq.mp5}.

\item
   If $t^*=t_F$, then
$\displaystyle{\frac{{\rm d}V_{j^*}}{{\rm d}t} \geq 0}$  at  $t=t^*$ because
$V_{j}$ reaches its maximum at $(j^*,t^*)$ for all $(j,t)\in \mathbb{Z}\times [0,t_F]$.  The argument in the previous point \ref{item.p2} still holds,
i.e., $(A\mathbf{V})_{j^*} \leq 0$ at  $t=t^*$. Thus, it is a contradiction
to \eqref{eq.mp5}.

\item
  In the general case of \eqref{eq.mp1}, we define
$\mathbf{V}^\delta :=\mathbf{V}-  \delta t$
where $\delta$ is a positive parameter.
Then, we have $ A\mathbf{V}^\delta =  A\mathbf{V}$, and
$\displaystyle{
	\frac{{\rm d}V_j^\delta}{{\rm d}t} - (A\mathbf{V}^\delta)_j
	=  \frac{{\rm d}V_j}{{\rm d}t} -\delta - (A\mathbf{V})_j <0
}$ for $(j,t) \in I_{h,t_F}$.
Thus, we have $ \max_{(j,t)\in \mathbb{Z}\times [0,t_F]} V_j^\delta(t) = \max_{(j,t)\in \partial I_{h,t_F}} V_j^\delta(t)$.
We obtain \eqref{eq.mp2} by letting $\delta \rightarrow 0$.
This concludes the proof of the assertion \ref{prop.1}.

\item By considering $-\mathbf{V}$, the assertion \ref{prop.2} follows immediately.

\end{enumerate}
\end{proof}

\begin{remark} \label{Rem1}
  The condition $m_2\geq 0$ for the maximum principle
is equivalent to  requiring the drift $f$ satisfy
$f' \geq -\min_{s \in (-1,1)}\frac{\varepsilon b^{-\a}}{\a}[\frac{C_n}{(1-s)^\a}+\frac{C_p}{(1+s)^\a}]$.
For example, if $\a=\beta=0.5, \e=1, b=1$, the RHS of the above inequality
is $-0.76$, therefore, the O-U potential $f = -0.6x$ would satisfy the maximum principle. We point out that any drift $f$ with $f' \geq 0$ satisfies
the condition $m_2\geq 0$.
\end{remark}

\begin{prop} [Maximum principle for the natural condition] \label{propMaxNat}

  Assume $f'(bs) \geq 0$ for $s\in (-1,1)$.
  \begin{enumerate}[label=(\roman*)]
   \item\label{prop.Nat1}  If
\begin{equation}
   \frac{{\rm d}V_j}{{\rm d}t} -  (A\mathbf{V})_j \leq 0
   \text{  for } (j,t)\in I_{h,t_F} ,
\label{eq.Natmp1}
\end{equation}
then
\begin{equation}
	\max_{(j,t)\in \mathbb{Z}\times [0,t_F]} V_j(t) = \max_{(j,t)\in \partial I_{h,t_F}} V_j(t).
\label{eq.Natmp2}
\end{equation}
    \item\label{prop.Nat2}  If
\begin{equation}
   \frac{{\rm d}V_j}{{\rm d}t} -  (A\mathbf{V})_j \geq 0
   \text{  for } (j,t) \in I_{h,t_F}
\label{eq.Natmp3}
\end{equation}
then
\begin{equation}
	\min_{(j,t)\in \mathbb{Z}\times [0,t_F]} V_j(t) = \min_{(j,t)\in \partial I_{h,t_F}} V_j(t).
\label{eq.Natmp4}
\end{equation}
\end{enumerate}
\end{prop}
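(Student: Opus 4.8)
The plan is to run the same contradiction argument as in the proof of Proposition~\ref{propMax}, after first reconciling the two hypotheses. By Remark~\ref{rem.nc} the natural-condition scheme is identical to \eqref{eq.FPEsd} with $m_2(s)$ replaced by $c'(bs)$; since $c$ differs from $f$ only by an additive constant (see \eqref{eq.cs}), one has $c'(bs)=f'(bs)$, so the standing assumption $f'(bs)\ge 0$ is exactly the condition $m_2(s)\ge 0$ that drove Proposition~\ref{propMax}. I would therefore begin by recording this identification, which reduces the statement to the same operator inequalities already analyzed.

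Next I would treat assertion~\ref{prop.Nat1}. Assuming first the strict inequality $\frac{{\rm d}V_j}{{\rm d}t}-(A\mathbf V)_j<0$ on $I_{h,t_F}$ and supposing a maximum is attained at an interior point $(j^*,t^*)$, I would use $\frac{{\rm d}V_{j^*}}{{\rm d}t}\ge 0$ at $t^*$ (with equality if $t^*<t_F$) and aim to show $(A\mathbf V)_{j^*}\le 0$, contradicting the inequality. The estimate $(S\mathbf V)_{j^*}\le 0$ is verbatim the one in Proposition~\ref{propMax}: every summand in \eqref{eq.S} is non-positive because $V_k-V_{j^*}\le 0$ and each derivative-correction product such as $(s_k-s_{j^*})\tfrac{V_{j^*}-V_{j^*-1}}{h}$ has the favorable sign at a maximum. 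For $(B\mathbf V)_{j^*}$ the central-difference term is $\le 0$ (using $C_h\ge 0$), the upwind term satisfies $m_1(s_{j^*})\delta_u V_{j^*}\ge 0$ by \eqref{eq.uw}, and the remaining zeroth-order term is $-m_2(s_{j^*})V_{j^*}$. The non-strict case then follows from the perturbation $\mathbf V^\delta=\mathbf V-\delta t$ exactly as before, and assertion~\ref{prop.Nat2} follows by applying~\ref{prop.Nat1} to $-\mathbf V$.

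The main obstacle is precisely the sign of the zeroth-order term $-m_2(s_{j^*})V_{j^*}$. In Proposition~\ref{propMax} this was controlled by the absorbing condition $V_k=0$ for $|k|\ge J$, which forced $V_{j^*}\ge 0$ at an interior maximum; that crutch is absent here. To recover it I would shift by the boundary maximum: set $M_\partial=\max_{\partial I_{h,t_F}}V$ and $\mathbf W=\mathbf V-M_\partial\mathbf 1$. A direct computation gives $(S\mathbf 1)_j=0$ and $(B\mathbf 1)_j=-m_2(s_j)$, hence the key identity $(A\mathbf 1)_j=-m_2(s_j)\le 0$, so that $\frac{{\rm d}W_j}{{\rm d}t}-(A\mathbf W)_j=\frac{{\rm d}V_j}{{\rm d}t}-(A\mathbf V)_j-M_\partial m_2(s_j)\le 0$ whenever $M_\partial\ge 0$, while $\mathbf W\le 0$ on $\partial I_{h,t_F}$. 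An interior maximum of $\mathbf W$ would then be strictly positive, making $-m_2(s_{j^*})W_{j^*}\le 0$, which closes the contradiction; translating back yields \eqref{eq.Natmp2}. The nonnegativity $M_\partial\ge 0$ is automatic for the probability-density solutions of interest (the initial datum is a density), so the only delicate point is to phrase this reduction cleanly and to verify the constant-vector identity $(A\mathbf 1)_j=-m_2(s_j)$, on which the whole adaptation hinges.
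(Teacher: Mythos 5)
Your proposal is correct, and its first two paragraphs are precisely the paper's proof: the paper disposes of Proposition~\ref{propMaxNat} in two lines by noting (via Remark~\ref{rem.nc} and Eq.~\eqref{eq.cs}) that the natural-condition scheme is \eqref{eq.FPEsd} with $m_2(s)=c'(bs)=f'(bs)$, so the hypothesis $f'(bs)\geq 0$ is exactly the condition $m_2\geq 0$ and the contradiction argument of Proposition~\ref{propMax} repeats verbatim. Where you genuinely diverge is the third paragraph: you judge that the step $V_{j^*}\geq 0$ at an interior maximum is lost because the hypotheses \eqref{eq.Natmp1}, \eqref{eq.Natmp3} no longer state $V_j=0$ for $|j|\geq J$, and you repair this by shifting by the boundary maximum. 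Under the paper's conventions this repair is unnecessary: the natural condition is implemented by domain truncation, so the scheme still hard-wires $V_j=0$ for $|j|\geq J$ into the matrices $B$ and $S$ (the paper simply omits restating it), and since $\partial I_{h,t_F}$ contains points $(j,t)$ with $|j|\geq J$ where $V_j=0$, an interior global maximum again satisfies $V_{j^*}\geq 0$ exactly as in Proposition~\ref{propMax}. That said, your shift argument is sound and buys something real: it establishes the maximum principle without assuming the exterior values vanish, at the price of the hypothesis $M_\partial\geq 0$ (automatic under truncation, and for probability densities). One caveat on the step you yourself flag as delicate: the identity $(S\mathbf{1})_j=0$, hence $(A\mathbf{1})_j=-m_2(s_j)$, holds only if the constant vector is extended by ones outside $I_h$; with the paper's matrix convention (exterior entries zero), the end terms of the sums in \eqref{eq.S}, the second difference at $j=\pm(J-1)$, and the upwind term at the extreme indices all contribute additional \emph{non-positive} boundary corrections, so one obtains only the inequality $(A\mathbf{1})_j\leq -m_2(s_j)\leq 0$ rather than equality. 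This is harmless, since your argument uses only the sign: for $M_\partial\geq 0$ one still gets
\begin{equation*}
\frac{{\rm d}W_j}{{\rm d}t}-(A\mathbf{W})_j
=\frac{{\rm d}V_j}{{\rm d}t}-(A\mathbf{V})_j+M_\partial\,(A\mathbf{1})_j
\leq \frac{{\rm d}V_j}{{\rm d}t}-(A\mathbf{V})_j\leq 0,
\end{equation*}
and $W_{j^*}>0$ at any putative interior maximum exceeding $\max_{\partial I_{h,t_F}}W=0$, so the contradiction closes, the $\delta t$ perturbation handles the non-strict case, and assertion \ref{prop.Nat2} follows from $-\mathbf{V}$ as you say.
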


\begin{proof}
  We note that the numerical scheme for the natural condition is the same
as that for the absorbing condition except the definition of $m_2$ is
simplified to $c'(bs)$ or $f'(bs)$ (see Remark~\ref{rem.nc} and Eq.~\eqref{eq.cs}).
Consequently, the proof is the same as that for Proposition~\ref{propMax}.
\end{proof}

\begin{remark} \label{Rem0}
  One can find many fully discrete schemes that satisfies maximum principle.
For example, one can show
that, if  $ m_2(s) \geq 0$ for $s \in (-1,1)$,
then the solution to the backward Euler scheme for the time integration
\bear \label{ImplicitEuler}
\frac{V_j^n-V_j^{n-1}}{\Delta t} = (A\mathbf{V}^n)_j, \quad \text{ for }
   -J+1\leq j\leq J-1, \quad n=1, 2, \cdots
\enar
where $V_j^n$ means the numerical solution of $v$ at $(s_j,t_n)$
with $t_n=n\Delta t$ and $\mathbf{V}^n= (V_{-J+1}^n, \cdots, V_{J-1}^n)^T$,
satisfies the maximum principle
\bear
\max_{(j,n) \in I_{h}\times I_n} V_j^n \leq \max_{(j,n) \in \mathbb{Z} \times
\bar{I_n} \setminus I_h\times I_n } V_{j}^n, \quad
\min_{(j,n) \in I_{h}\times I_n} V_j^n \geq \min_{(j,n) \in \mathbb{Z} \times
\bar{I_n} \setminus I_h\times I_n } V_{j}^n,
\enar
where $\Delta t=t_F/n_F$ for some total number of time steps
$n_F \in \mathbb{Z}$ and
\begin{equation}
	I_n = \{ n\in \mathbb{Z}: 1\leq n \leq n_F\}, \quad
	\bar{I_n} = \{ n\in \mathbb{Z}: 0\leq n \leq n_F\}.
\end{equation}
The proof will follows closely with that for Proposition~\ref{propMax}
by realizing that, if $V_j^n$ reaches the maximum at $V_{j^*}^{n^*}$, then
\bear
\frac{V_{j^*}^{n^*}-V_{j^*}^{n^*-1}}{\Delta t} \geq 0.
\enar
\end{remark}

In the following, we mainly use the backward Euler for time evolution as
the probability density will be negative near the boundary for the forward Euler sheme. The
Fig.~\ref{ComUpWNorm1} shows the situation for $\a=0.5, \beta=0.5, f=-x, \s=0, \e=1$.

\befig[h]
 \begin{center}
\includegraphics[width=0.8\linewidth]{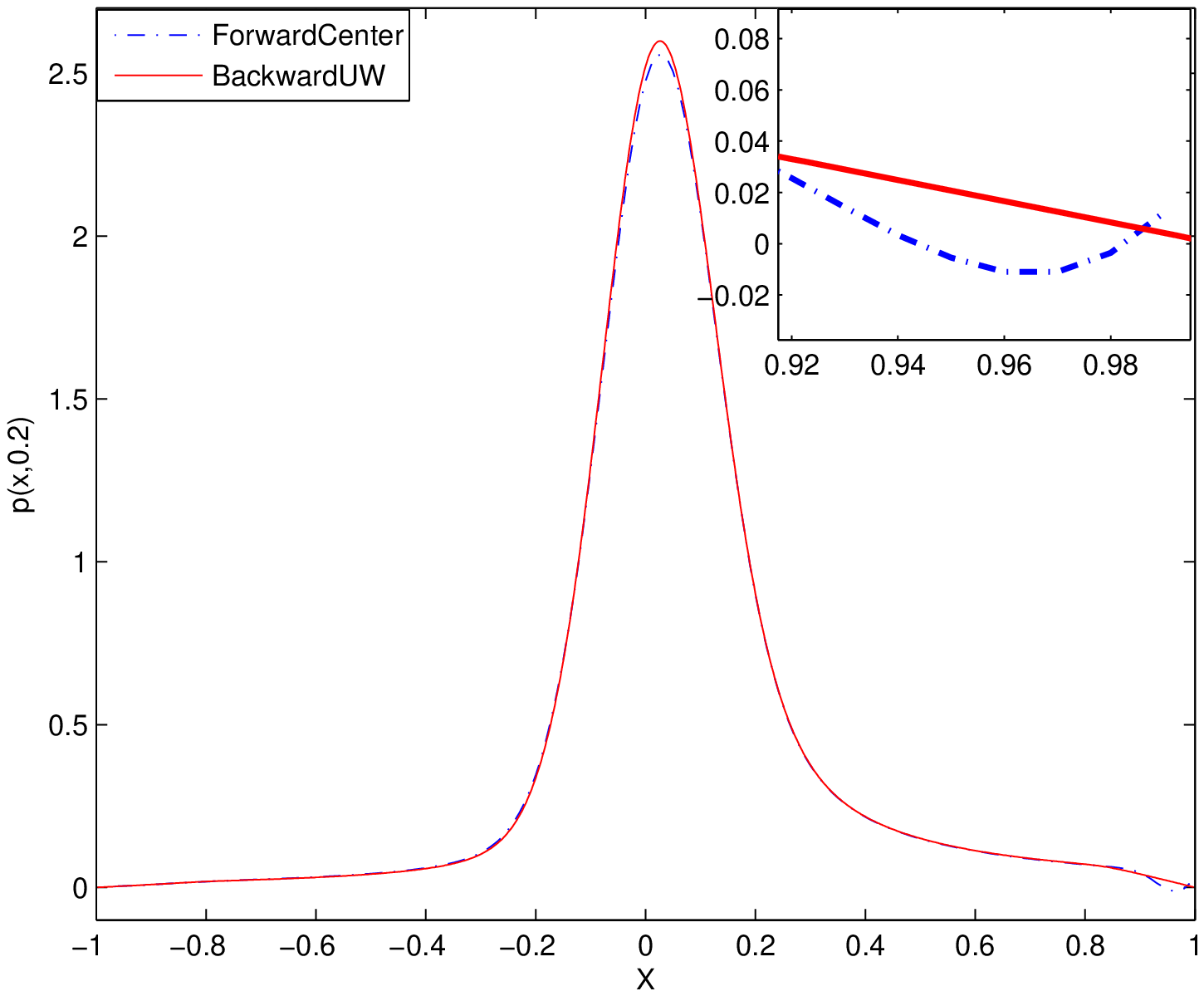}
\end{center}
\caption{Different numerical scheme for FPE with $\a=0.5, \beta=0.5, f=-x, \s=0, \e=1$. The blue solid
line provide the numerical solution for time evolution with forward Euler(blue solid line) and backward Euler(red dash-dot line).  }
\label{ComUpWNorm1}
\enfig

\begin{remark} \label{rem.conv}
   The stability of the schemes based on (\ref{eq.FPEsd}) follows immediately
from the maximum principle we have proved in the section. It is obvious
that the fully discretized schemes are consistent, where the details of the truncation error is discussed in \cite{Xiao2016}. Since the equations ~(\ref{FPKchange1}) and (\ref{FPKchange2}) are linear, the schemes are convergent due to
the Lax Equivalence theorem.
\end{remark}

\section{Numerical results}
\subsection{Verfication}
\setcounter{equation}{0}

Before discussing the evolution of the PDFs obtained from the simulations,
we validate our numerical methods by comparing with a known exact solution.
Based on the density function for L\'evy distribution($\a=0.5, \b=1$)
and the scaling property of stable random variables,
we have the PDF for $L_t$
\cite{Apple, Duanbook2, Taqqu}
\bear \label{LevyDis}
    p(x,t) = \frac{x^{-\frac32}t}{\sqrt{2\pi}} e^{-\frac{t^2}{2x}},\quad \text{for} \;  x>0; \qquad p(x,t) = 0,\quad \text{for} \;  x\leq 0 .
\enar


\befig[h]
\begin{center}
\includegraphics[width=0.8\linewidth]{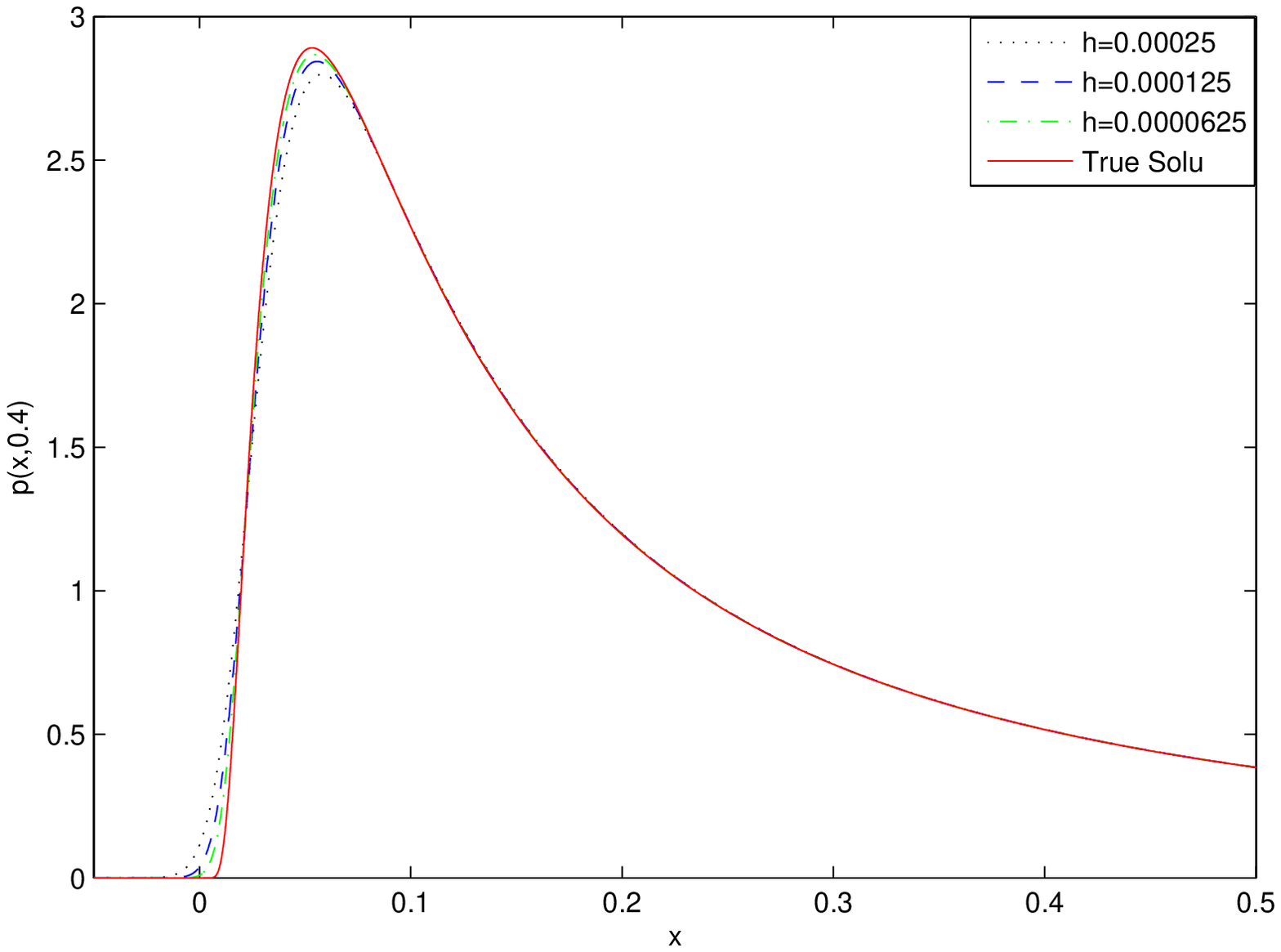}
\vspace{-0.3in}
\end{center}
\caption{ Comparison between the analytic solution (the solid line) and
the numerical solutions at time
$t_F=0.4$ for $\a=0.5, \beta=1, f\equiv 0, \sigma=0, \e=1$ and
different resolutions $h=0.00025$ (the dotted line),
$0.000125$ (the dashed line) and $0.0000625$ (the dash-dotted line).}
\label{fig.cklevy}
\enfig
To compare our numerical solution with the exact solution~(\ref{LevyDis}),
we start our numerical computation from the time $t=0.2$
by setting the initial condition to be
$ p(x,0.2)$ given in \eqref{LevyDis}.
Noticing the analytic solution correspond to the natural condition,
we take the computational domain $D=(-10,10)$ and $\a=0.5, \beta=1, f\equiv 0,
\sigma=0, \e=1$.
Figure~\ref{fig.cklevy} shows the solutions at the time $t_F=0.4$
with different spatial resolutions $h=0.00025, 0.000125, 0.0000625$
and the time step-size $\Delta t=0.5h$.
The results show that the numerical solutions agree with the exact solution
well and the difference or the error decreases as we increase the resolution.

\subsection{Evolution of PDFs}
\befig[h]
 \begin{center}
\includegraphics[width=0.8\linewidth]{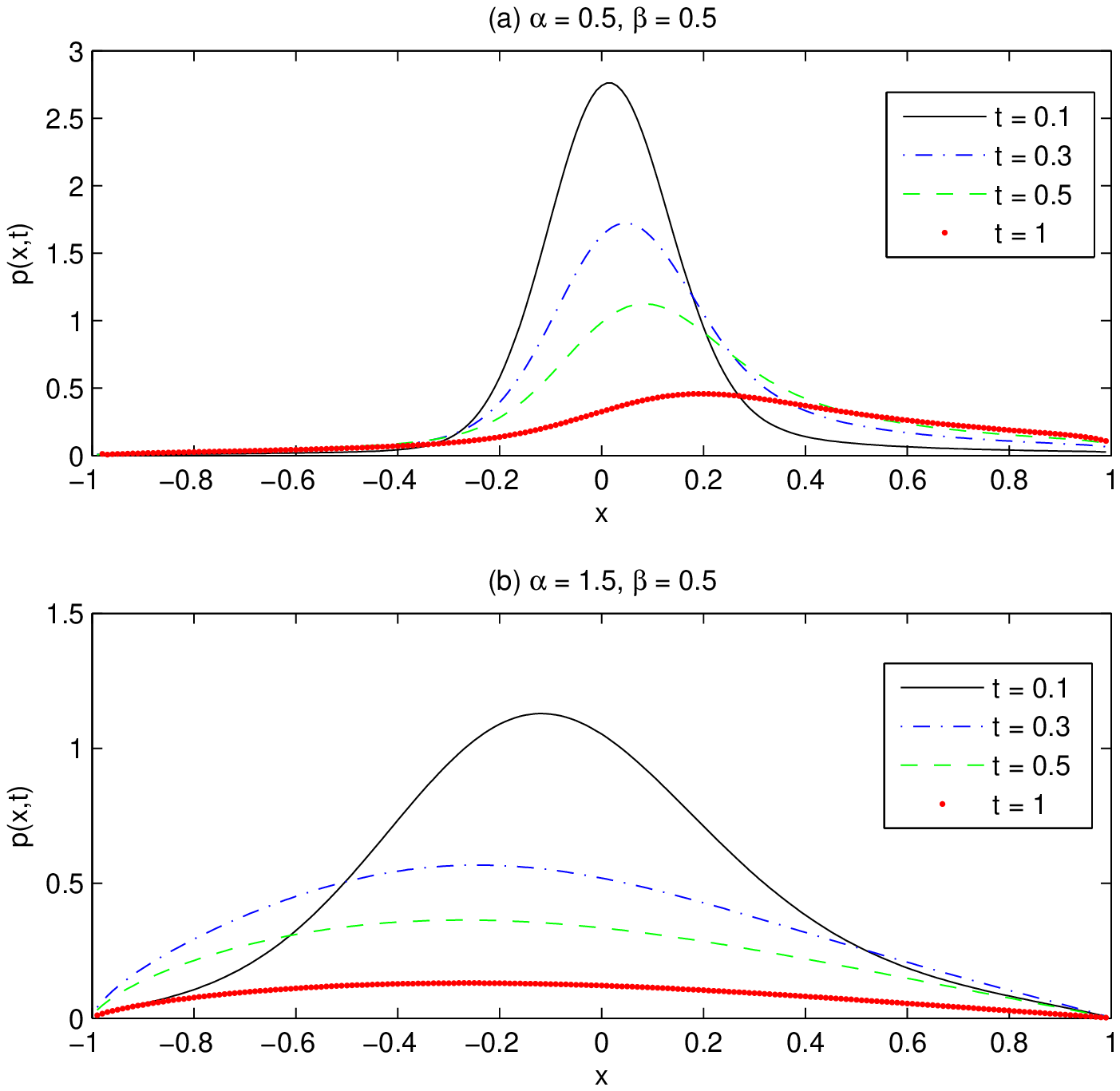}
\vspace{-0.3in}
\end{center}
\caption{ The evolution of PDFs $p$ for $\sigma=0, f\equiv  0,\b=0.5, \e=1$
subject to the absorbing condition with $D=(-1,1)$ and
the initial condition $p(x,0) = \sqrt{\frac{40}{\pi}}e^{-40x^2}$.
(a) $\a=0.5$; (b) $\a=1.5$. }
\label{DifT}
\enfig
One of the ways to understand the behavior of a stochastic process governed
by the SDE \eqref{SDE01} is through its PDF. First, we numerically find
the PDF corresponding to $L_t$ subject to the absorbing condition
with $D=(-1,1)$. Initially, the location of the process is at $x$
has the probability
$p(x,0) = \sqrt{\frac{40}{\pi}}e^{-40x^2}$,
having a sharp peak at the origin $0$.
The time evolution of the probability density $p$ is shown in Fig.~\ref{DifT}
for $f\equiv0, \sigma=0, \beta=0.5, \e=1$ and two different values
of $\a=0.5, 1.5$. For $\a=0.5$, the peak decays and moves to the right;
for $\a=1.5$, the peak decays faster but moves to the left.
Because the skewness parameter $\beta$ is positive, there is larger tendency
to jump to the right. The linear drift coefficient $K_{\a,\beta}$
due to the compensation is positive when $\a<1$ and becomes negative
when $\a>1$. The jump direction and the drift work together to render the movement of the peak in the case of $\a=0.5$. On the other hand, when $\a=1.5$,
the two factors compete and the effect of the drift dominates.
Further more, we note that the shape of the PDF becomes convex
after $t\geq 0.3$ and more smooth for $\a=1.5$,  while the PDF for $\a=0.5$
appears to be discontinuous at the right boundary $x=1$ at large times.


Next, we investigate the effects of different parameters on the solution
to the FPE~\eqref{eq.fpeas}, including the skewness parameter $\b$,
the drift $f$, the intensities of Gaussian noise $\s$ and non-Gaussian noise $\e$, the domain size $D$ and
the auxiliary conditions.

\befig[h]
 \begin{center}
\includegraphics[width=0.8\linewidth]{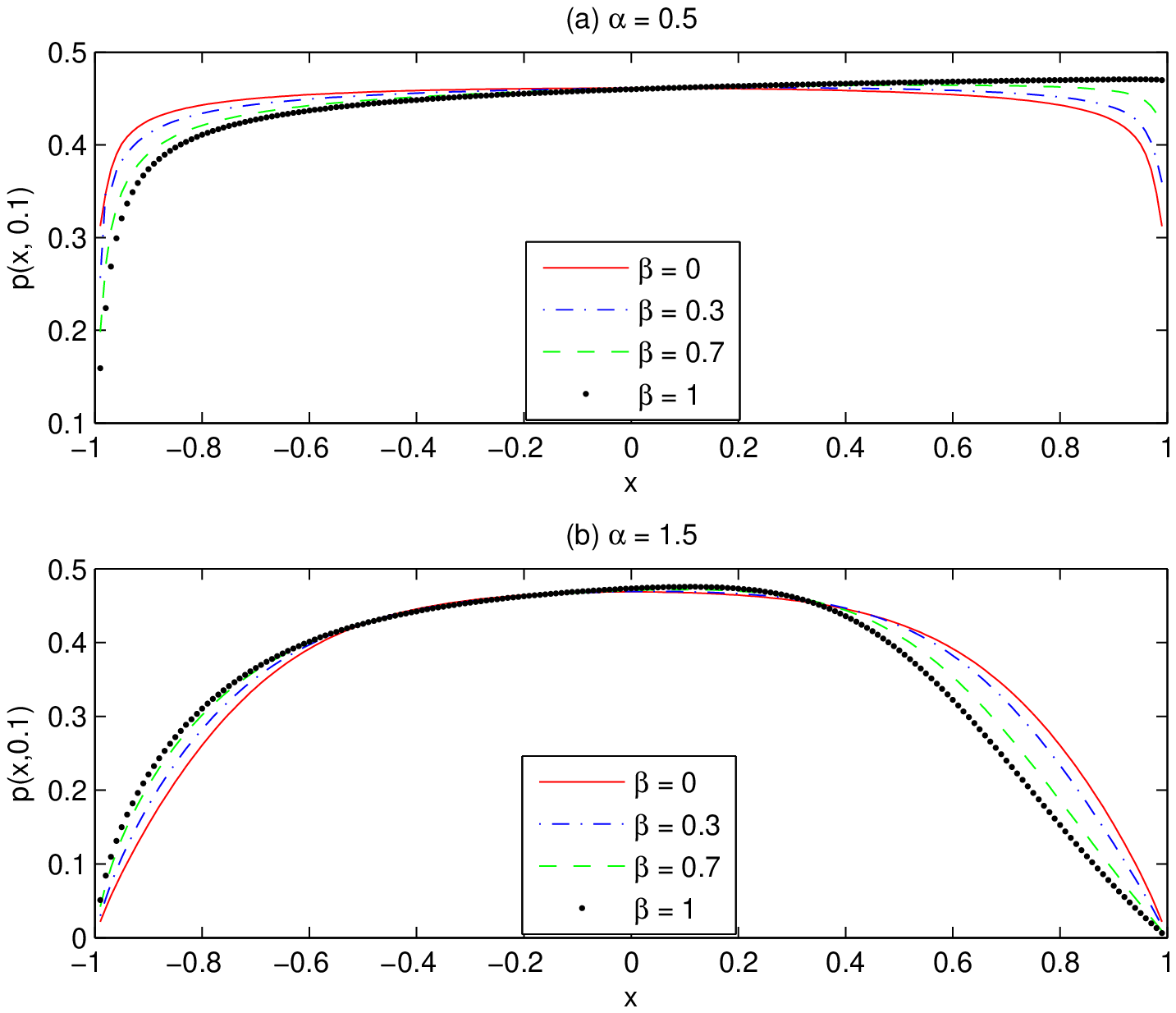}
\vspace{-0.3in}
\end{center}
\caption{The effect of the skewness parameter $\beta$. (a) The PDFs at time
$t=0.1$ are plotted for different values of $\b=0, 0.3, 0.7, 1$ with
$\a = 0.5, f \equiv 0, \s=0, \e=1, D=(-1,1)$ and
the initial condition $p(x,0) = 0.5I_{\{|x|<1\}}$.
  (b) The same as (a) except $\a=1.5$. }
\label{DifBeta}
\enfig

\befig[h]
 \begin{center}
\includegraphics[width=0.8\linewidth]{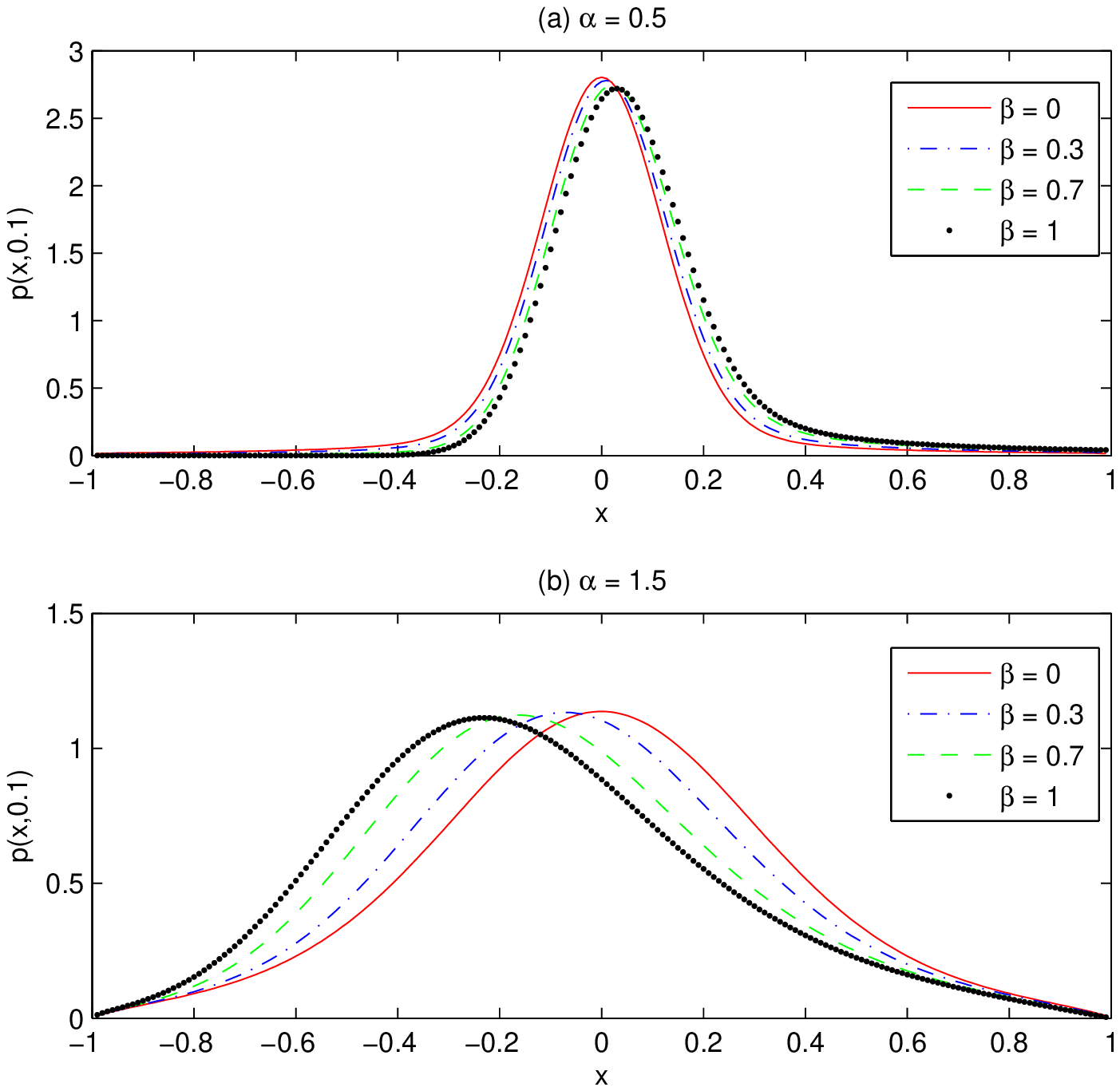}
\vspace{-0.3in}
\end{center}
\caption{The effect of the skewness parameter $\beta$ on the PDFs at time
$t=0.1$ for $f\equiv 0, \s=0, \e=1, D=(-1,1)$ and the initial condition $p(x,0) = \sqrt{\frac{40}{\pi}}e^{-40x^2}$.
(a) $\a=0.5$; (b) $\a=1.5$.  }
\label{DifBetaExp}
\enfig
We consider the effect of the skewness parameter $\beta$ on the PDFs with
initial condition of a uniform distribution $p(x,0) = 0.5I_{\{|x|<1\}}$
in Fig.~\ref{DifBeta}. The impact of $\beta$ on the PDF is
different for $0<\a<1$ and $1<\a<2$.
For $\a=0.5$ (Fig.~\ref{DifBeta}(a)),
the PDFs have relatively flat profiles in the middle but drop to zero sharply
at the left boundary of the domain $x=-1$. As $\b$ increases,
the probability profile tilts toward the right, i.e., having larger
probability for positive $x$. We point out the interesting behavior
of the PDFs at the right boundary of the domain, where $p(x,0.1)$
becomes increasingly discontinuous as $\b$ approaches $1$.
On the other hand, for
$\a=1.5$ (Fig.~\ref{DifBeta}(b)),
the profiles of the PDFs are much smoother than those of $\a <1$
and the values of the PDFs are larger near the left boundary
as $\b$ increases. It is interesting to note that the PDFs for $\b>0$
reach their maxima at small positive $x$ values.
Similar effects of $\b$ can be seen from another example shown
in Fig.~\ref{DifBetaExp} where the initial condition
is a Gaussian distribution $p(x,0) = \sqrt{\frac{40}{\pi}}e^{-40x^2}$.
As $\b$ increases, the hump in the PDF profile
shifts to the right for $\a=0.5$ and to the left for $\a=1.5$.
Notice that the maximum of the PDF decreases slightly
as $\b$ is raised from $0$ in both cases.

\befig[h]
 \begin{center}
\includegraphics[width=0.8\linewidth]{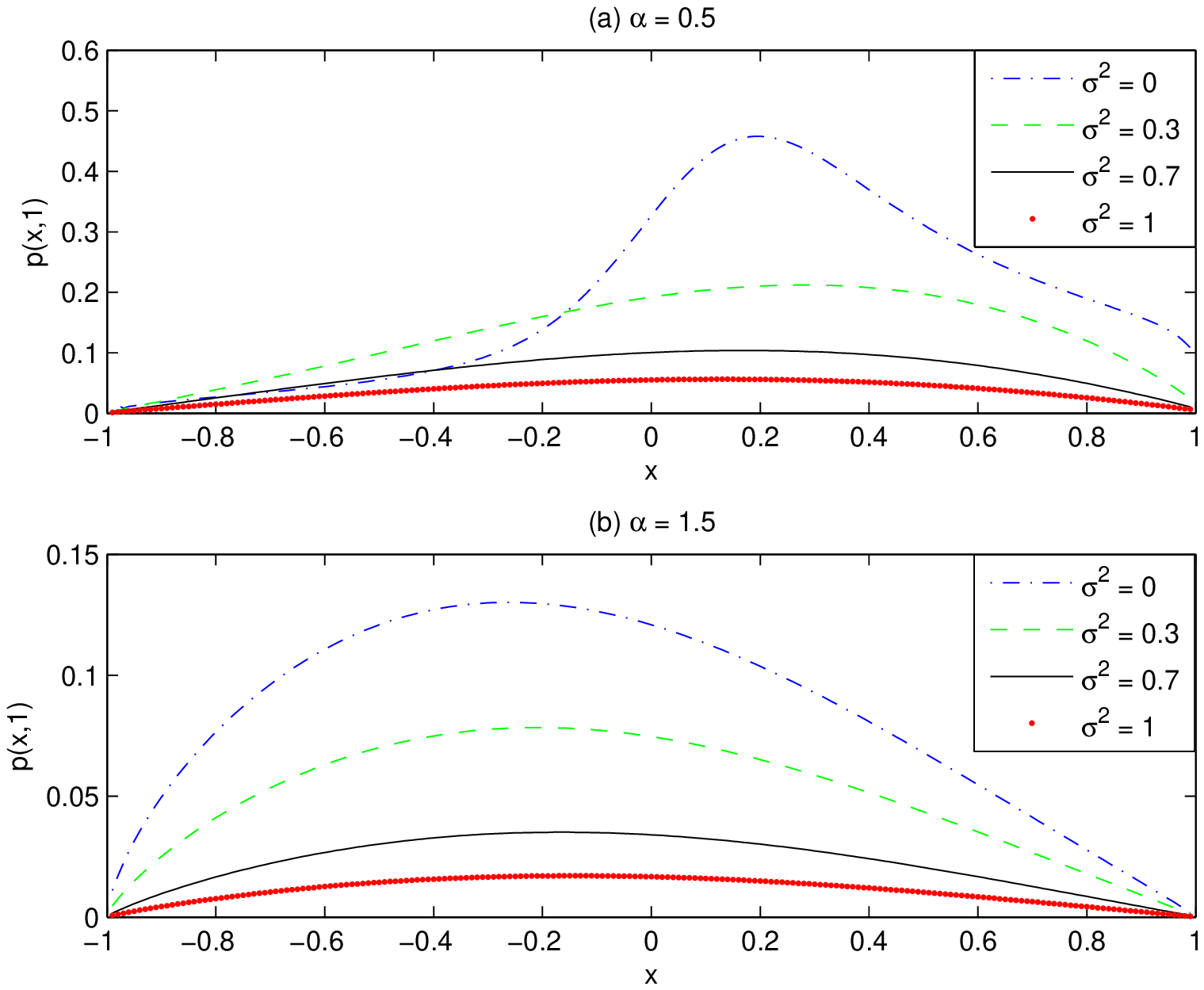}
\vspace{-0.3in}
\end{center}
\caption{
The effect of the intensity of Gaussian noise ($\s^2=0,0.3,0.7,1$) on the PDFs without drift ($f\equiv 0$) at the fixed time
$t=1$ with $\beta=0.5,\e=1, D=(-1,1)$ and the initial condition $p(x,0) = \sqrt{\frac{40}{\pi}}e^{-40x^2}$.
(a) $\a=0.5$;(b) $\a=1.5$. }
\label{ChangeDif}
\enfig

Figure~\ref{ChangeDif} shows the dependence of the probability density $p$
at time $t=1$ on the intensity of Gaussian noise $\s$, where the initial profile
is the Gaussian $p(x,0) = \sqrt{\frac{40}{\pi}}e^{-40x^2}$ and
the other parameters are fixed at $f\equiv 0, \beta=0.5,\e=1$ and $D=(-1,1)$.
Clearly, as one increases the amount of Gaussian noise $\s$, the process is
less likely to stay in the domain $D$ and the values of the PDFs
become smaller. Besides, the profiles of the PDFs at $t=1$
are all concave downward and are smooth at the boundaries of the domain
$x=\pm 1$ for $\s>0$. The graphs of the PDFs become more symmetric
with respect to the center of the domain when there are more Gaussian noises
or $\s$ increases.

\befig[h]
 \begin{center}
\includegraphics[width=0.8\linewidth]{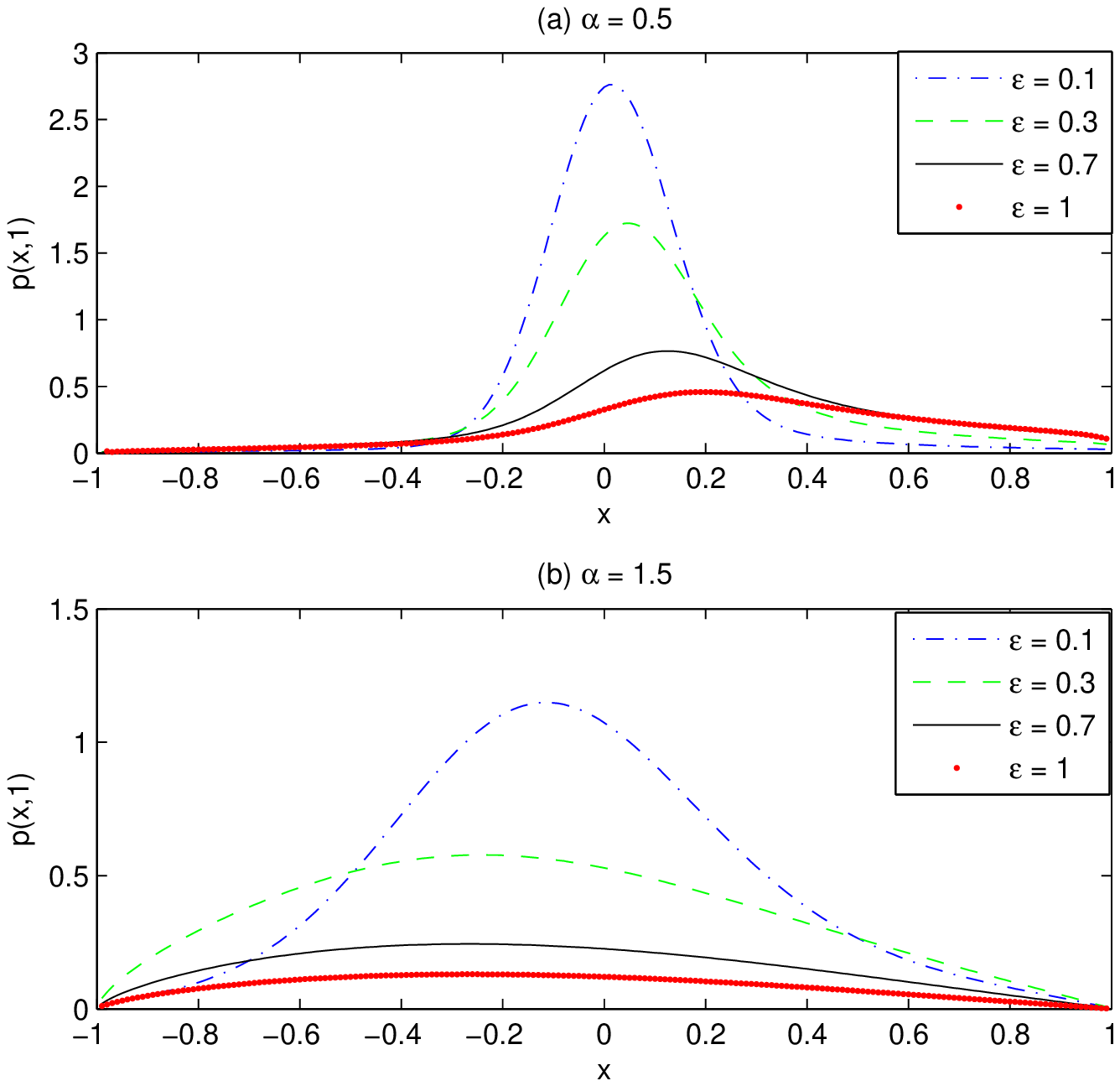}
\vspace{-0.3in}
\end{center}
\caption{ The effect of intensity of non-Gaussian noise ($\e=0.1,0.3,0.7,1$)
on the PDFs without the drift term ($f\equiv 0$) at time
$t=1$ with $\s=0, \beta=0.5,D=(-1,1)$ and the initial condition $p(x,0) = \sqrt{\frac{40}{\pi}}e^{-40x^2}$.
(a) $\a=0.5$; (b)$\a=1.5$. }
\label{ChangeEps}
\enfig
Keeping other parameters and the conditions as in Fig.~\ref{ChangeDif},
we examine the effect of the magnitude of non-Gaussian ($\e$) noises in
Fig.~\ref{ChangeEps} using different values of $\e=0.1, 0.3, 0.7$ and $1$.
The numerical results show that, for $\a=0.5$, the graphs of $p$ develop
a jump at the right boundary of $D$ and become more skew to the right
as $\e$ is increased; for $\a=1.5$, the PDFs become more skew to the left
when the non-Gaussian noise level is raised.

\befig[h]
 \begin{center}
\includegraphics[width=0.8\linewidth]{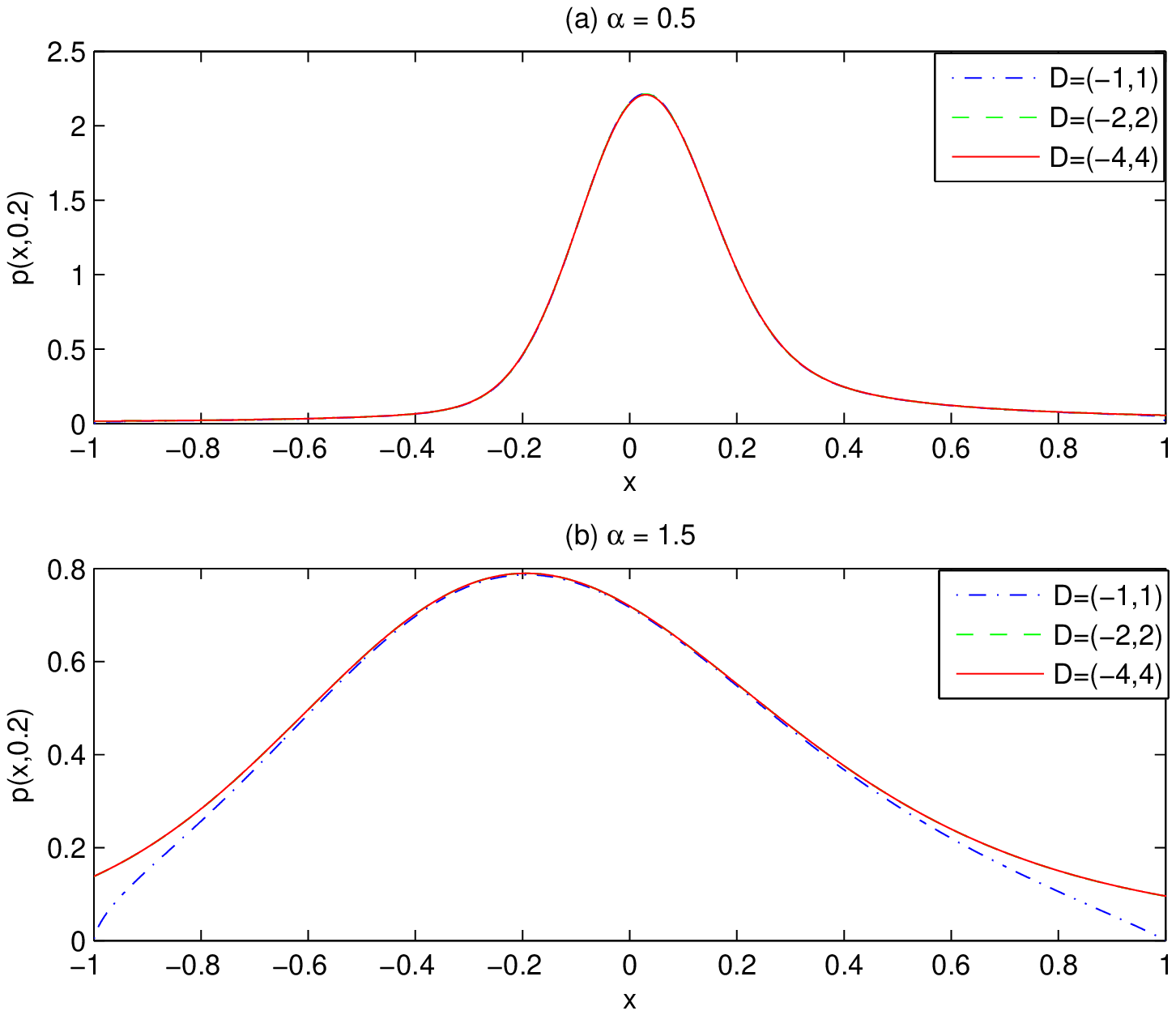}
\vspace{-0.3in}
\end{center}
\caption{The effect of the domain size $D$($D=(-1,1),(-2,2),(-4,4)$) for the absorbing condition with
$\beta=0.5,\s=0,\e=1,f\equiv 0$, at $t=0.2$.
The initial condition is the Gaussian density function
$p(x,0) = \sqrt{\frac{40}{\pi}}e^{-40x^2}$.
(a) $\a=0.5$;(b) $\a=1.5$.  }
\label{DiffDom}
\enfig
Figure~\ref{DiffDom} shows that the variety of the densities at time $t=0.2$ for different domains $D=(-1,1),(-2,2),(-4,4)$
and different $\a=0.5,1.5$ starting with the same Gaussian-type initial condition.
The auxiliary condition is the absorbing condition and $\beta=0.5, \e=1, \s=0$
without the drift.
From Fig.~\ref{DiffDom}(a) corresponding to $\a=0.5$,
we find that the densities for the differently sized domains are almost
identical on the interval $(-1,1)$. It can be explained by realizing
that most of the movement of the process governed by the SDE \eqref{SDE01}
consists of large jumps for $\a=0.5$.
For $\a=1.5$ as shown in Fig.~\ref{DiffDom}(b),
the probability finding the process near the peak $(-0.4,0)$ is about the same
for all three sizes of the domain $D$ but the density for the smallest domain
$D=(-1,1)$ quickly goes to zero at the boundary $x=\pm 1$.
It is interesting to note that the PDFs for the two larger domains $D=(-2,2)$
and $(-4,4)$ are almost identical on the interval $(-1,1)$.

\befig[h]
\begin{center}
\includegraphics[width=0.8\linewidth]{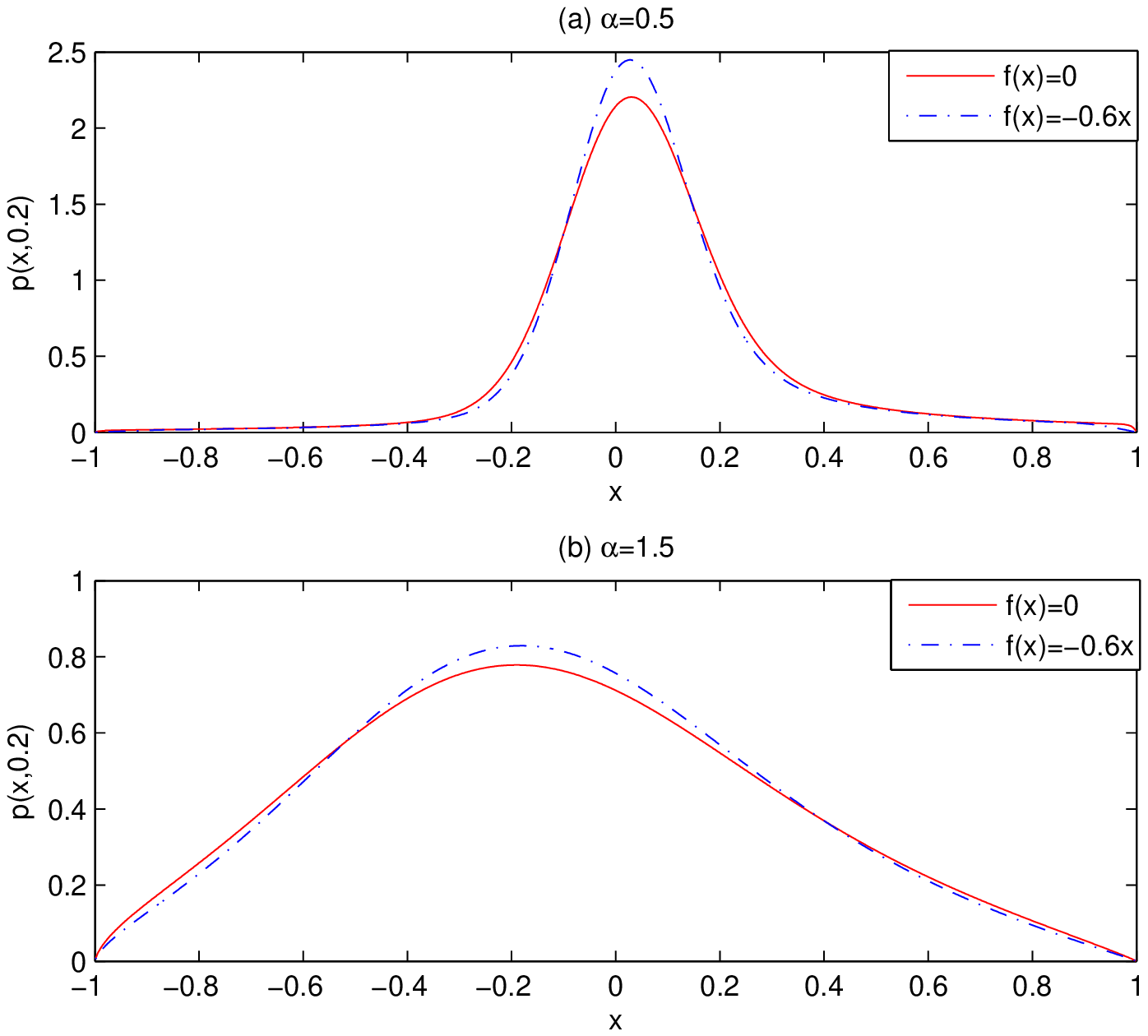}
\vspace{-0.3in}
\end{center}
\caption{ The effect of the drift $f$($f(x)= 0$ or $-0.6x$) on
the PDFs at time $t=0.2$ with $\beta=0.5,\s=0,\e=1, D=(-1,1)$
and the initial condition $p(x,0) = \sqrt{\frac{40}{\pi}}e^{-40x^2}$ for $\a=0.5$(a), $\a=1.5$(b).
 }
\label{DifDrift}
\enfig
Further, we investigate the effect of drift $f$. Figure~\ref{DifDrift} shows the changes of probability densities
if we add the O-U potential $f(x)=-0.6x$ for different $\a=0.5$(Fig.~\ref{DifDrift}(a)) and $\a=15$(Fig.~\ref{DifDrift}(b))
with $\beta=0.5,\s=0,\e=1, D=(-1,1)$ starting with the initial
profile of $p(x,0) = \sqrt{\frac{40}{\pi}}e^{-40x^2}$.
It can be seen that the densities become larger near the origin and
more symmetric with respect to the center of the domain as expected.

\befig[h]
 \begin{center}
\includegraphics[width=0.8\linewidth]{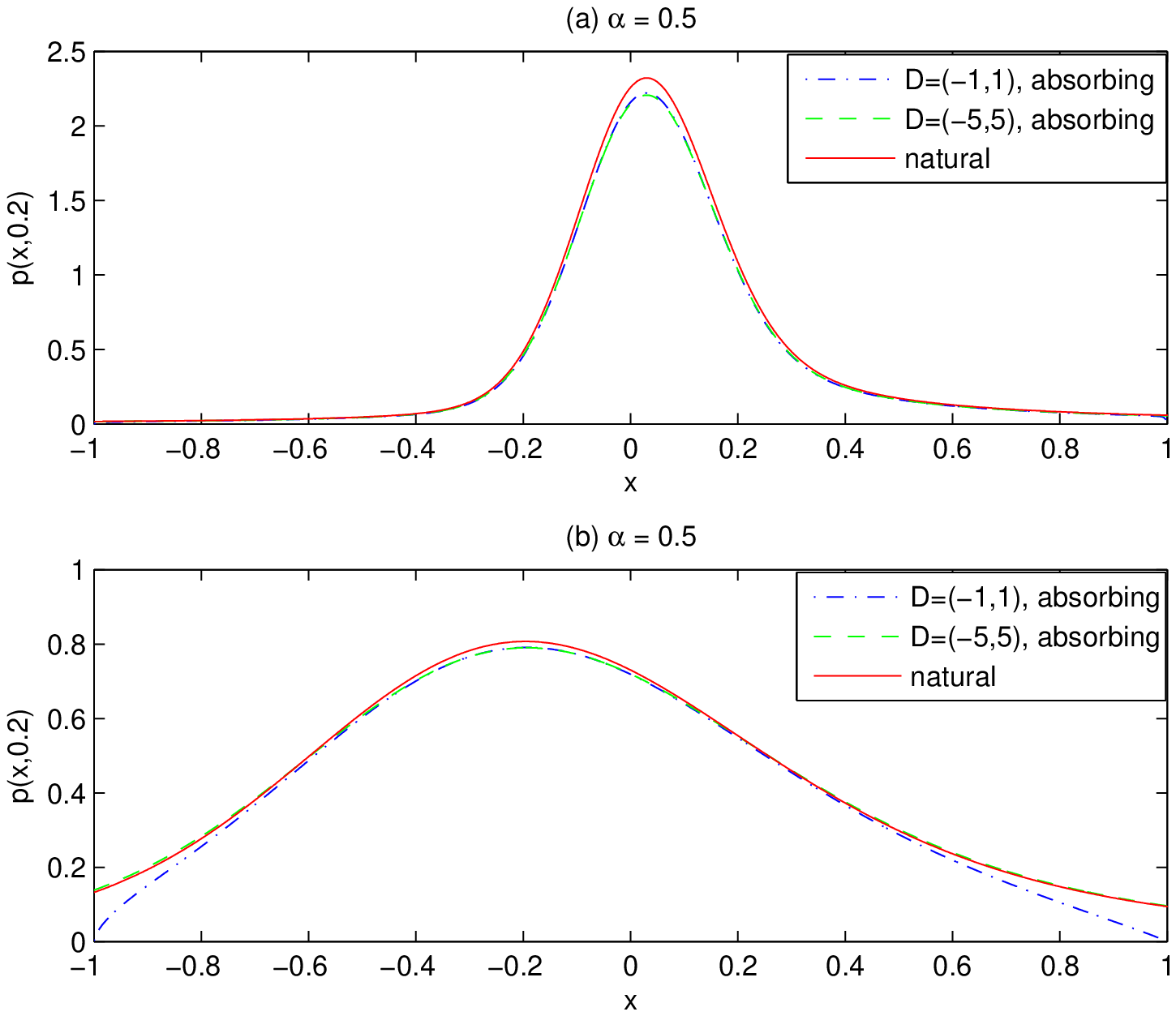}
\vspace{-0.3in}
\end{center}
\caption{
Comparison of the PDFs at time $t=0.2$ between
the natural condition and the absorbing condition for the domains $D=(-1,1)$
and $(-5,5)$ with $\beta=0.5,\s=0,\e=1,f\equiv0$
and the initial condition of $p(x,0) = \sqrt{\frac{40}{\pi}}e^{-40x^2}$.
(a) $\a=0.5$; (b) $\a=1.5$.  }
\label{CompBoundary}
\enfig
Finally, we consider the effect of auxiliary conditions described in \S~\ref{sec.ac}. In Fig.~\ref{CompBoundary}, we plot the PDFs at the same time $t=0.2$
but with three different auxiliary conditions: the natural condition
and the absorbing conditions for two domain sizes $D=(-1,1)$ and $(-5,5)$.
We keep the other parameters the same as those in Fig.~\ref{DiffDom}.
For both values of $\a=0.5$ and $1.5$,
the PDF for the natural condition is slightly larger
than the ones for the absorbing condition. For $\a=0.5$,
the PDFs for the three different auxiliary conditions
are almost identical on the interval $(-1,1)$ except near the peak region.
For $\a=1.5$, the PDF for the absorbing condition with $D=(-1,1)$ drops
to zero near the boundary of its domain $x=\pm 1$ while
the PDF for the absorbing condition with $D=(-5,5)$ is close to that
of the natural condition on the interval $(-1,1)$.

\subsection{Most probable phase portrait (MPPP)}

\befig[h]
 \begin{center}
\includegraphics[width=0.8\linewidth]{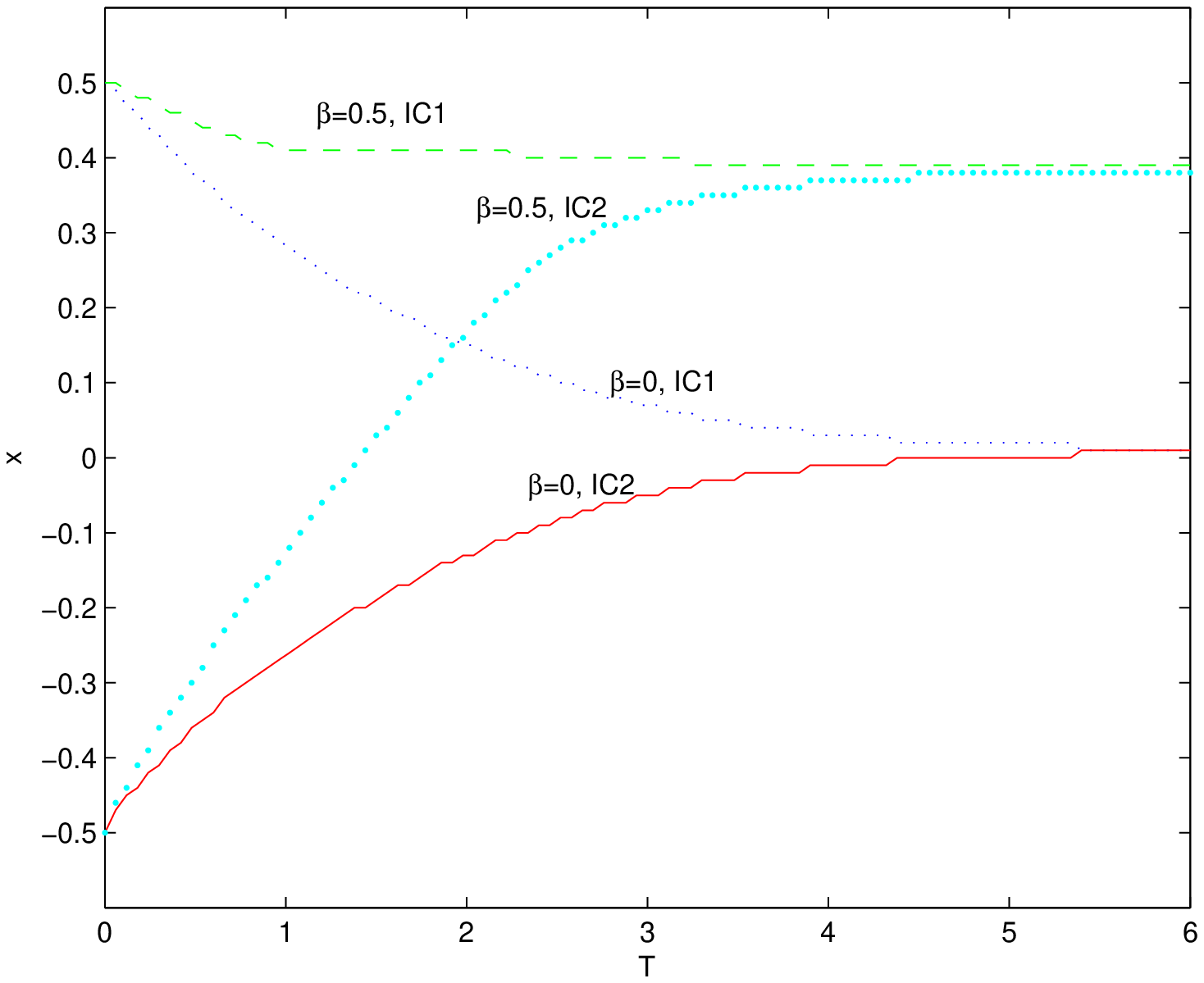}
\end{center}
\caption{
The MPPP for different $\beta=0, 0.5$ with the drift term $f(x)=-0.6x$ and $\a=0.5,d=0,\e=1, D=(-1,1)$.
The initial condition 1(IC1) and 2(IC2) correspond to the Gaussian
density functions $p(x,0) = \sqrt{\frac{40}{\pi}}e^{-40(x-0.5)^2}$
and $p(x,0) = \sqrt{\frac{40}{\pi}}e^{-40(x+0.5)^2}$, respectively.  }
\label{MPPPx}
\enfig
From the solutions to the Fokker-Planck equation \eqref{eq.fpeas},
one can find the most probable orbits of stochastic dynamical
systems \eqref{SDE01} driven by asymmetric L\'evy motion.
MPPP, denoted by $x_m(t)$, is defined as the maximum of the PDF at time $t$,
i.e. $x_m(t)=\max_{x \in \R}p(x,t)$,
which gives the most probable orbit starting at $x_0$ \cite{Duanbook2}.
Figure~\ref{MPPPx} plots the MPPPs for $\a=0.5, \s=0, \e=1, D=(-1,1), f(x)=-0.6x$
and different values of $\beta=-0.5, 0, 0.5$.
To approximate the delta function $\delta(x_0)$,
we choose the initial condition of Gaussian density function $p(x,0) = \sqrt{\frac{40}{\pi}}e^{-40(x-x_0)^2}$. In absence of the noises, the system
has the unique globally stable state at the origin $x=0$. When we introduce
the symmetric L\'evy noises corresponding to $\beta=0$, the process
still goes to the origin independent of the initial starting point $x_0$.
When the asymmetric L\'evy noises are present ($\beta=0.5$),
the MPPP approaches to a point different than the origin.
It is interesting that MPPP converges to the same point for a fixed value
of $\beta$ regardless the initial condition.

\section{Conclusion}

Due to its wide range of applications of non-Gaussian L\'evy noises in many
disciplines, we study the
Fokker-Planck equation with asymmetric $\a$-stable L\'evy motion,
which is a nonlocal (integro-differential) partial differential equation.
The Fokker-Planck equation describes the time evolution of the probability density function.
In this work, we show a symmetry property for solutions
with respect to the sign of $\b$, enabling us only need to consider the cases
with $\b>0$. We have developed an accurate and fast numerical scheme
for solving the FPEs for different auxiliary conditions
(the absorbing condition and the natural condition).
The numerical method is validated by comparing the numerical solution
with a special exact solution and used to compute the solutions corresponding
to different parameters in the system.  We find that
the PDFs are discontinuous at the right boundary when $\a<1$ and
$\b>0$ and the discontinuity becomes more evident when $\b$ increases;
the discontinuity disappears for $\a>1$.
We have also considered the most probable phase portrait and find
that the process approaches the same state when starting with different
condition.

\section{Acknowledgements}
The research is partially supported by the grants China Scholarship Council \#201306160071 (X.W.), NSF-DMS \#1620449 (J.D. and X.L.),
National Natural Science Foundation of China grants 11531006 and 11771449(J.D.).

\end{document}